\documentclass[12 pt]{article}%
\usepackage{amsmath, amsfonts, amsthm, color,latexsym}
\usepackage{amsmath, ulem}
\usepackage{amsfonts}
\usepackage{amssymb}
\usepackage{color, soul}
\usepackage[all]{xy}
\usepackage{graphicx}%
\setcounter{MaxMatrixCols}{30}
\providecommand{\U}[1]{\protect\rule{.1in}{.1in}}
\allowdisplaybreaks[4]
\newtheorem{theorem}{Theorem}[section]
\newtheorem{proposition}[theorem]{Proposition}
\newtheorem{corollary}[theorem]{Corollary}
\newtheorem{example}[theorem]{Example}

\newtheorem{remark}[theorem]{Remark}

\newtheorem{lemma}[theorem]{Lemma}
\newtheorem{final remark}[theorem]{Final Remark}

\textwidth=16.1cm
\textheight=23cm
\hoffset=-15mm
\voffset=-20mm
\allowdisplaybreaks[4]

\newcommand{\sol}[1]{\text{sol}(#1)}

\newcommand {\cvfe} {\overset{\omega^\ast}{\rightarrow}}

\newcommand {\R}{\mathbb{R}}

\newcommand {\N} {\mathbb{N}}
\newcommand{\A}{\mathcal{A}}
\newcommand{\norma}[1]{\| #1 \|}
\newcommand{\conj}[2]{\left \{ {#1} \, : \, {#2} \right \}}

\begin{document}

\title{Banach lattices of linear operators and of homogeneous polynomials not containing $c_0$}
\author{Geraldo Botelho\thanks{Supported by Fapemig grants RED-00133-21 and APQ-01853-23.}~,~Vinícius C. C. Miranda\thanks{Supported by CNPq Grant 150894/2022-8 and Fapemig grant APQ-01853-23\newline 2020 Mathematics Subject Classification: 46B42, 46B28, 46G25.\newline Keywords: Banach lattices, regular homogeneous polynomials, compact polynomials, weakly compact polynomials, almost limited polynomials, $p$-compact polynomials. }~~and Pilar Rueda}
\date{}
\maketitle

\begin{abstract}
\noindent First we develop a technique to construct Banach lattices of homogeneous polynomials. We obtain, in particular, conditions for 
the linear spans of all positive compact and weakly compact $n$-homogeneous polynomials between the Banach lattices $E$ and $F$, denoted by 
${\cal P}_{\cal K}^r(^n E; F)$ and $\mathcal{P}_{\mathcal{W}}^r(^n E; F)$, to be Banach lattices with the polynomial regular norm. 
Next we study when the following are equivalent for ${\cal I} = {\cal K}$ or ${\cal I} = {\cal W}$: (1) The space $\mathcal{P}^r(^n E; F)$ of regular polynomials contains no copy of $c_0$. (2) ${\cal P}_{\mathcal{I}}^r(^n E; F)$ contains no copy of $c_0$.  (3) ${\cal P}_{\mathcal{I}}^r(^n E; F)$ is a projection band in $\mathcal{P}^r(^n E; F)$. 
  (4) Every positive polynomial in $\mathcal{P}^r(^n E; F)$ belongs to ${\cal P}_{\cal I}^r(^nE;F)$. The result we obtain in the compact case can be regarded as a lattice polynomial Kalton theorem. Most of our results and examples are new even in the linear case $n = 1$. 
\end{abstract}

\section{Introduction}

A classical problem in Functional Analysis consists in studying embeddability of $c_0$ in spaces of bounded linear operators between Banach spaces (see, e.g., \cite{bator,  emmanuele2, emmanuele1, kalton}). One of the most known results in this direction is Kalton's theorem \cite[Theorem 6]{kalton} which states that
for a Banach space $X$ with an unconditional finite-dimensional expansion of the identity and an infinite dimensional Banach space $Y$, the space $\mathcal{L}(X;Y)$ of all bounded linear operators from $X$ to $Y$ contains no copy of $c_0$ if and only if every bounded linear operator from $X$ to $Y$ is compact. In \cite{perez}, S. Pérez studied the embeddability of $c_0$ in the space $\mathcal{P}(^n X; Y)$ of all continuous $n$-homogeneous polynomials. In the lattice setting, this issue is specially important because the non embeddability of $c_0$ in a Banach lattice is equivalent to the lattice being a $KB$-space. In this direction, extending previous results from \cite{ bulixue, buwong, lijibu}, F. Xanthos \cite{xanthos} gave the following version of Kalton's theorem for the Banach lattice $\mathcal{L}^r(^n E; F)$ of all regular linear operators:
for an atomic Banach lattice $E$ with order continuous norm and an arbitrary Banach lattice $F$, $\mathcal{L}^r(^n E; F)$ contains no copy of $c_0$ if and only if every positive linear operator from $E$ to $F$ is compact (see \cite[Theorem 2.9]{xanthos}).

The interest in studying polynomial versions of well known results or properties in Banach lattice theory have been considerably increased recently (see \cite{botelholuiz, botlumir,  boydryansnig2, boydryansnig, buquaterly, galmir1, libu, wangshibu}). It is then a natural question to seek for a ``lattice polynomial version" of Kalton's theorem \cite[Theorem 6]{kalton}. The main purpose of this manuscript is to obtain conditions on the Banach lattice $E$ and $F$ so that the Banach lattice $\mathcal{P}^r(^n E; F)$ of all regular $n$-homogeneous polynomials from $E$  to $F$  contains no copy of $c_0$ if and only if every positive $n$-homogeneous polynomial from $E$ to $F$ is compact.
In order to achieve this result, we need to introduce a complete lattice norm on the space $\mathcal{P}_{\mathcal{K}}^r(^n E; F)$, which is the linear span of all positive compact $n$-homogeneous polynomials from $E$ to $F$. In general, $\mathcal{P}_{\mathcal{K}}^r(^n E; F)$ is not a sublattice of $\mathcal{P}^r(^n E; F)$. Indeed, for $n = 1$, examples where $\mathcal{K}^r(E; F) := \mathcal{P}_{\mathcal{K}}^r(^1 E; F)$ is not a sublattice of $\mathcal{L}^r(^n E; F)$ and $\mathcal{K}^r(E; F)$ is not closed in $\mathcal{L}^r(^n E; F)$ with the regular norm are well known (see \cite[Corollary 3]{abrawick} and \cite[Corollaries 2.9 and 2.10]{chenwick2}). Fortunately, there exists  a norm (called $k$-norm) on $\mathcal{K}^r(E; F)$ under which it is a Banach space (see \cite[Proposition 2.2]{chenwick2}).
In Section $2$, we are going to give conditions such that $\mathcal{P}_{\mathcal{K}}^r(^n E; F)$ is a Banach lattice with a certain norm which  will coincide with the regular norm in this case (Example \ref{corcompacto}). Actually, we will prove a more general result showing how to construct Banach lattices of regular linear operators and of regular homogeneous polynomials (Theorem \ref{teogeral}). The case of compact polynomials, along with other interesting classes of polynomials  and linear operators, follow,  in Sections 2 and 3, as particular instances of the general construction. 

In Section 4, we will prove that for a Banach lattice $E$ that fails the dual positive Schur property and an infinite dimensional atomic Banach lattice $F$ with order continuous norm, $\mathcal{P}^r(^n E; F)$ contains no copy of $c_0$ if and only if every positive $n$-homogeneous polynomial from $E$ to $F$ is compact. 
In the weakly compact case we write $\mathcal{P}_{\mathcal{W}}^r(^n E; F)$ for the linear span of all positive weakly compact $n$-homogeneous polynomials from $E$ to $F$. A result similar to the compact case shall be obtained for $E$ failing the positive Grothendieck property, $F$ atomic Dedekind complete and $\mathcal{P}^r(^n E; F)$ with order continuous norm.

To the best of our knowledge, except for part of Example \ref{corcompacto}, our results and examples are new even in the linear case $n = 1$.


We refer the reader to \cite{alip, meyer} for background on Banach lattices, to \cite{fabian} for Banach space theory,  and to \cite{dineen} for polynomials on Banach spaces. Throughout this paper, $E$ and $F$ denote Banach lattices, $E^+$, $B_E$ and $S_E$ denote, respectively, the positive  cone, the closed unit ball and the unit sphere of $E$, and, whenever $P$ is a continuous homogeneous polynomial, $\|P\|$ denotes its usual sup norm, that is, $\|P\| = \sup\{\|P(x)\|: x \in B_E\}$. For a subset $A \subset E$, we denote by $\text{co}(A)$ its convex hull, by $\text{sol}(A)$ its solid hull and by $\text{sco}(A)$ its solid convex hull. The symbol $E \cong F$ means that there exists an isometric isomorphism from $E$ to $F$ that is also a lattice homomorphism.

\section{Banach lattices of operators and of polynomials}

We begin by recalling the terminology and a few properties concerning regular polynomials and the positive projective symmetric tensor product. Details can be found in \cite{bubuskes} and \cite{loane}.

A $n$-homogeneous polynomial between Riesz spaces $P \colon E \to F$ is positive  if its associated symmetric multilinear operator $T_P\colon E^n \to F$ is positive, meaning that $T_P(x_1, \ldots, x_n) \geq 0$ for all $x_j \in E_j^+, j = 1,\ldots, n$. The difference of two positive $n$-homogeneous polynomials is called a regular homogeneous polynomial, and the set of all these polynomials is denoted by $\mathcal{P}^r(^nE, F)$. When $F$ is the scalar field we simply write $\mathcal{P}^r(^nE)$. If $E$ and $F$ are Banach lattices with $F$ Dedekind complete, then $\mathcal{P}^r(^nE, F)$ is a Banach lattice with the regular norm
$$\norma{P}_r = \norma{|P|} = \inf\{ \|Q\| : Q \in {\cal P}^+(^n E;F), \,Q \geq |P|\},$$ where $|P|$ denotes the absolute value of the regular $n$-homogeneous polynomial $P \colon E \to F$.

For a Banach lattice $E$, we denote the
$n$-fold positive projective symmetric tensor product of $E$ by $\widehat{\otimes}_{s, |\pi|}^n E$, which is a Banach lattice
endowed with the positive projective symmetric tensor norm $\norma{\cdot}_{s, |\pi|}$. As usual, we write $\otimes^n x = x \, \otimes \stackrel{n}{\cdots} \otimes \, x$ for every $x \in E$, and for a subset $A$, we write $\otimes^n[A] = \conj{\otimes^n x}{x \in A}$.  
For every $P \in \mathcal{P}^r(^nE, F)$ there exists a unique regular linear operator $P^\otimes \colon \widehat{\otimes}_{s, |\pi|}^n E \to  F$, called the linearization of $P$, such that $P(x) = P^\otimes(\otimes^n x)$ for every $x \in E$. Moreover, the correspondence
\begin{equation}P \in \mathcal{P}^r(^n E, F) \mapsto P^\otimes \in \mathcal{L}^r\left(\widehat{\otimes}_{s, |\pi|}^n E; F\right)  \label{omku}\end{equation}
is an isometric isomorphism and a lattice homomorphism. 

If $\mathcal{A}$ is a vector subspace of $\mathcal{P}(^n E; F)$, we denote by $\mathcal{A}^+$ the class of all positive $n$-homogeneous polynomials belonging to $\mathcal{A}$. We say that the ordered pair $(E, F)$ satisfies the $\mathcal{A}$-domination property if, for all  positive $n$-homogeneous polynomials $P, Q \colon E \to F$ with $0 \leq P \leq Q \in \mathcal{A}$, it holds $P \in \A$.

\begin{theorem} \label{teogeral}
    Let $E$ and $F$ be Banach lattices with $F$ Dedekind complete and let $\A$ be a subspace of $\mathcal{P}(^n E; F)$ endowed with a complete norm $\norma{\cdot}_{\A}$ satisfying the following conditions: \\
    {\rm (I)} $\norma{P} \leq \norma{P}_\A$ for every $P \in \A$. \\
    {\rm (II)} $\norma{P}_{\A} \leq \norma{Q}_\A$ for all $P \in \A$ and $Q \in \A^+$ with  $|P(x)| \leq Q(|x|)$ for every $x \in E$.\\
    Then
     $$ \norma{P}_{\mathcal{A},r} := \inf \conj{\norma{Q}_\A}{Q \in \mathcal{A}^+, Q \geq |P|} $$
     defines a complete norm on $\mathcal{A}^r := \text{span}\{\mathcal{A}^+ \}$
    such that $\norma{P}_{\A,r} \geq \norma{P}_r$ for every $P\in \A^r$.  If, in addition, $(E, F)$ satisfies the $\mathcal{A}$-domination property, 
    then $(\mathcal{A}^r, \norma{\cdot}_{\A,r})$ is a Banach lattice and $\|\cdot\|_{\cal A} \leq \|\cdot\|_{{\cal A},r}$. Moreover, in this case, $\A^r$ is an ideal in $\mathcal{P}^r(^n E; F)$.
\end{theorem}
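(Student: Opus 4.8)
The plan is to first establish that $\norma{\cdot}_{\A,r}$ is a genuine norm on $\A^r$, then that it is complete, and finally — under the domination hypothesis — that $\A^r$ is a Banach lattice and an ideal in $\mathcal{P}^r(^nE;F)$. For the norm axioms: positive homogeneity and the fact that $\norma{P}_{\A,r}=0$ forces $P=0$ follow from condition (I), since $\norma{P}_r\le\norma{Q}_\A$ for any admissible $Q$ (because $\norma{Q}\ge\norma{|P|}\ge\norma{P}_r$... wait, more carefully: $\norma{Q}_\A\ge\norma{Q}\ge\norma{|P|}=\norma{P}_r$), giving at once the inequality $\norma{P}_{\A,r}\ge\norma{P}_r$ announced in the statement. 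For the triangle inequality, given $P_1,P_2\in\A^r$ and admissible $Q_i\ge|P_i|$, one checks $Q_1+Q_2\in\A^+$ and $Q_1+Q_2\ge|P_1|+|P_2|\ge|P_1+P_2|$ (the last step using that $|\cdot|$ is subadditive on the Banach lattice $\mathcal{P}^r(^nE;F)$), so $\norma{P_1+P_2}_{\A,r}\le\norma{Q_1}_\A+\norma{Q_2}_\A$ and we take infima. Here I would use the linearization isometry \eqref{omku} freely to transport lattice identities from $\mathcal{L}^r$ to $\mathcal{P}^r$.

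For completeness, I would run the standard absolutely-convergent-series argument: it suffices to show every series $\sum_k P_k$ with $\sum_k\norma{P_k}_{\A,r}<\infty$ converges in $(\A^r,\norma{\cdot}_{\A,r})$. Pick $Q_k\in\A^+$ with $Q_k\ge|P_k|$ and $\norma{Q_k}_\A\le\norma{P_k}_{\A,r}+2^{-k}$; then $\sum_k Q_k$ converges in $(\A,\norma{\cdot}_\A)$ to some $Q\in\A$ by completeness of $\norma{\cdot}_\A$, and one should verify $Q\ge0$ (a limit in $\norma{\cdot}_\A$ is in particular a limit in $\norma{\cdot}$ by (I), and the positive cone is closed for $\norma{\cdot}$) and that the tails $\sum_{k>N}Q_k$ are positive polynomials dominating $|\sum_{k>N}P_k|$ — this uses that $\sum_{k\le N}P_k\to$ some limit in $\norma{\cdot}$, hence pointwise, so $|\sum_{k>N}P_k(x)|=\lim_{M}|\sum_{N<k\le M}P_k(x)|\le\lim_M\sum_{N<k\le M}Q_k(|x|)=(\sum_{k>N}Q_k)(|x|)$. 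Condition (II) then gives $\norma{\sum_{k>N}P_k}_{\A,r}\le\norma{\sum_{k>N}Q_k}_\A\to0$, proving convergence of the partial sums in $\norma{\cdot}_{\A,r}$.

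Now assume $(E,F)$ satisfies the $\A$-domination property. The point of this hypothesis is precisely that $\A^r$, which a priori is only a subspace of $\mathcal{P}^r(^nE;F)$, is actually an ideal: if $R\in\mathcal{P}^r(^nE;F)$ with $|R|\le|P|$ for some $P\in\A^r$, pick admissible $Q\ge|P|\ge|R|$, $Q\in\A^+$; then $0\le|R|\le Q$ with $Q\in\A$, so domination gives $|R|\in\A$, whence $R=R^+-R^-\in\A^r$ (both $R^\pm\le|R|\le Q$). In particular $\A^r$ is a sublattice of $\mathcal{P}^r(^nE;F)$, so the lattice operations are well-defined internally, and it carries the order inherited from $\mathcal{P}^r(^nE;F)$. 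It remains to check $\norma{\cdot}_{\A,r}$ is a lattice norm, i.e. $|P|\le|P'|\Rightarrow\norma{P}_{\A,r}\le\norma{P'}_{\A,r}$: any admissible $Q\ge|P'|\ge|P|$ for $P'$ is admissible for $P$, so this is immediate from the definition as an infimum. Combined with completeness this makes $(\A^r,\norma{\cdot}_{\A,r})$ a Banach lattice. Finally, for $\norma{\cdot}_\A\le\norma{\cdot}_{\A,r}$ on $\A^r$: given $P\in\A^r$ and admissible $Q\ge|P|\in\A^+$ (note $|P|\in\A$ now, by the ideal property just proved, so it is itself admissible and the infimum is attained at $Q=|P|$), condition (II) applied with this $Q=|P|$ — since $|P(x)|\le|P|(|x|)$ holds in any Banach lattice of polynomials — yields $\norma{P}_\A\le\norma{|P|}_\A=\norma{P}_{\A,r}$.

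The main obstacle I anticipate is the verification in the completeness step that the $\norma{\cdot}_\A$-limit $Q=\sum_k Q_k$ really is a positive polynomial and that the tail estimates $|\sum_{k>N}P_k(x)|\le(\sum_{k>N}Q_k)(|x|)$ pass correctly to the limit; both hinge on interlocking the three topologies (the $\norma{\cdot}_\A$-topology, the sup-norm topology, and pointwise convergence) via condition (I), together with closedness of the positive cone. Everything else is a routine infimum manipulation once the $\A$-domination property is invoked to secure that $\A^r$ is an ideal.
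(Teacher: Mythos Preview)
Your proposal follows essentially the same route as the paper's proof: norm axioms via (I) and the obvious infimum manipulations, completeness by controlling tails with dominating positive polynomials $Q_k\ge|P_k|$ summed in $(\A,\norma{\cdot}_\A)$, and then---under the $\A$-domination property---the sublattice/ideal verification and the lattice-norm check. The only cosmetic difference is that you run the completeness argument via absolutely convergent series while the paper passes to a rapidly Cauchy subsequence \`a la \cite[Proposition~1.3.6]{meyer}; and you place completeness before the domination hypothesis (which matches the theorem statement), whereas the paper writes it afterwards and invokes that $\A^r$ is already a vector lattice.

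One small slip to fix: in your tail estimate you write ``Condition (II) then gives $\norma{\sum_{k>N}P_k}_{\A,r}\le\norma{\sum_{k>N}Q_k}_\A$''. Condition (II) only controls $\norma{\cdot}_\A$; the bound on $\norma{\cdot}_{\A,r}$ you want comes straight from its \emph{definition}, once you know that $\sum_{k>N}Q_k\ge\bigl|\sum_{k>N}P_k\bigr|$ in the \emph{polynomial lattice order}, not merely the pointwise estimate $|R(x)|\le Q(|x|)$ you derived. For $n\ge2$ these are not the same. The clean fix is to argue in the order directly: from $-Q_k\le P_k\le Q_k$ sum to get $-\sum_{N<k\le M}Q_k\le\sum_{N<k\le M}P_k\le\sum_{N<k\le M}Q_k$, then let $M\to\infty$ using that the positive cone of $\mathcal{P}^r(^nE;F)$ is closed for $\norma{\cdot}$ (polarization plus (I)). The paper's proof makes the same informal jump from the pointwise inequality $|(P-P_i)(x)|\le R_i(|x|)$ to $|P-P_i|\le R_i$, so you are in good company; just be aware that this is the step where the argument needs the lattice order, not condition~(II).
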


\begin{proof}     Observe first that, since $F$ is Dedekind complete, $\mathcal{P}^r(^n E; F)$ is a vector lattice, so we can consider $|P|$ for every $P \in \mathcal{P}^r(^n E; F)$. Besides, since $\A^+ \subset \mathcal{P}^+(^n E; F)$ and $\norma{P} \leq \norma{P}_\A$ for every $P \in \A$, we have
    \begin{align*}
        \norma{P}_{\A,r} & = \inf \conj{\norma{Q}_\A}{Q \in \mathcal{A}^+,\, Q \geq |P|} \\
            & \geq \inf \conj{\norma{Q}}{Q \in {\cal A}^+,\, Q \geq |P|} \\
            & \geq \inf \conj{\norma{Q}}{Q \in \mathcal{P}^+(^n E; F), \,Q \geq |P|} = \norma{P}_r
    \end{align*}
    for every $P \in \A^r$.
    Let us prove that $\norma{\cdot}_{\A ,r}$ is a norm in $\A^r$: \\
    {\rm (i)} If $\norma{P}_{\A ,r} = 0$, there exists a sequence $(Q_i)_i \subset \A^+$ with $Q_i \geq |P|$ for any $i \in \N$ and $\lim\limits_{i \to \infty} \norma{Q_i}_{\A} = 0$. By condition (I) we get
     $\lim\limits_{i \to \infty} Q_i(x) = 0$ for every $x \in E$. Hence,    $$ |P(x)| \leq |P|(|x|) \leq \lim_{i \to \infty} Q_i(|x|) = 0 $$
    for every $x \in E$, which implies that $P = 0$. \\
    {\rm (ii)} Let $\lambda \in \R$ and $P \in \A^r$ be given. For $\lambda = 0$, it is immediate that $\norma{\lambda P}_{\A, r} = 0 = |\lambda|\!\cdot\! \norma{P}_{\A, r}$. Assume now that $\lambda \neq 0$.
    On the one hand, given $Q \in \A^+$ with $Q \geq |P|$, we have that $|\lambda| Q \colon E \to F$ is a positive $n$-homogeneous polynomial belonging to $\A$ and satisfying $|\lambda| Q \geq |\lambda P|$, which implies that $\norma{\lambda P}_{\A,r} \leq \norma{|\lambda| Q}_\A = |\lambda| \!\cdot\!\norma{Q}_\A$. It follows that $\norma{\lambda P}_{\A,r} \leq |\lambda|\!\cdot\! \norma{P}_{\A,r}$.

     On the other hand, given $Q \in \A^+$ with $Q \geq |\lambda P|$, we have that $|P| \leq \frac{1}{|\lambda|} Q \in  \A^+$. Then, $\norma{P}_{\A,r} \leq \norma{\frac{1}{|\lambda|} Q}_\A = \frac{1}{|\lambda|}\!\cdot\! \norma{Q}_\A$, therefore $|\lambda|\!\cdot\!\norma{P}_{\A,r} \leq \norma{\lambda P}_{\A,r}$. \\
    {\rm (iii)} Let $P_1, P_2 \in \A^r$ be given.
    Letting $Q_1, Q_2 \colon E \to F$ be two positive $n$-homogeneous polynomials belonging to $\mathcal{A}$ with $Q_i \geq |P_i|$, we have that $Q_1 + Q_2 \in \mathcal{A}^+$ is such that $Q_1 + Q_2 \geq |P_1| + |P_2| \geq |P_1 + P_2|$, which yields that
    $$ \norma{P_1 + P_2}_{\A,r} \leq \norma{Q_1 + Q_2}_\A \leq \norma{Q_1}_\A + \norma{Q_2}_\A. $$
    Fixing $Q_1$, we get
    $$\norma{P_1 + P_2}_{\A,r} - \norma{Q_1}_\A \leq \inf \conj{\norma{Q}_\A}{Q \in \A^+, \, Q \geq |P_2|} = \norma{P_2}_{\A,r}.$$
    Hence,
    $$ \norma{P_1 + P_2}_{\A,r} - \norma{P_2}_{\A,r} \leq \norma{Q_1}_\A $$
    for every positive $n$-homogeneous polynomial $Q_1 \in {\cal A}$ satisfying $Q_1 \geq |P_1|$, so
    $$ \norma{P_1 + P_2}_{\A,r} \leq \norma{P_1}_{\A,r} + \norma{P_2}_{\A,r}. $$

    Suppose now that the pair $(E,F)$ satisfies the $\A$-domination property. Since $\mathcal{P}^r(^n E; F)$ is a Riesz space, to prove that $\A^r$ is a vector sublattice of $\mathcal{P}^r(^n E; F)$, it suffices us to check that $P^+ \in \A$ for every $P \in \A^r$. Let $P = P_1 - P_2 \in \A^r$ with $P_1, P_2 \in \A^+$. As $P_1 \geq 0$ and $P_1 \geq P$, we have $P_1 \geq P^+$, and by assumption we get $P^+ \in \A$. This proves that $\A^r$ is a vector lattice. We claim that $\norma{\cdot}_{\A, r}$ is a lattice norm on $\A^r$. Indeed, let $P, Q \in \A^r$ be such that $|P| \leq |Q|$. If $R \in \A^+$ is such that $R \geq |Q|$, then $R \geq |P|$, which implies by condition (II) of the assumptions that $\norma{|P|}_{\A} \leq \norma{R}_{\A}$. Thus $$\norma{|P|}_{\A} \leq \inf \conj{\norma{R}_{\A}}{R \in \A^+, \, R \geq |Q|} = \norma{Q}_{\A,r}.$$
    Since $\norma{P}_{\A,r} \leq \norma{|P|}_{\A}$, we have $\norma{P}_{\A,r} \leq \norma{Q}_{\A,r}$, which proves that $(\A^r, \norma{\cdot}_{\A, r})$ is a normed Riesz space.

     To prove that $(\mathcal{A}^r, \norma{\cdot}_{\A,r})$ is complete, let $(P_i)_i \subset \mathcal{A}^r$ be a $\norma{\cdot}_{\A,r}$-Cauchy sequence. The proof is similar as in \cite[Proposition 1.3.6]{meyer}. By passing to a subsequence if necessary, we may assume that
    $$\norma{P_i - P_{i+1}}_{\A,r} = \inf \conj{\norma{Q}_\A}{Q \in \A^+, \, Q \geq |P_i - P_{i+1}|} < 2^{-i}$$
    for any $i \in \N$. For every $i \in \N$, let $Q_i \in \A^+$ be such that $Q_i \geq |P_i - P_{i+1}|$ and $\norma{Q_i}_\A < 2^{-i}$. Since $(\A, \norma{\cdot}_\A)$ is a Banach space, for each $i \in \N$ there exists $R_i = \sum\limits_{j=i}^\infty Q_j \in \A$ with $\norma{R_i}_\A \leq \sum\limits_{j=i}^\infty \norma{Q_j}_\A < 2^{1-i}$. Moreover, $R_i \geq 0$ for every $i \in \N$.
     On the other hand, condition (II) and the fact that $\A^r$ is a vector lattice give $\norma{\cdot}_\A \leq \norma{\cdot}_{\A,r}$, so $(P_i)$ is a Cauchy sequence in the Banach space $(\A, \norma{\cdot}_\A)$. Let $P 
     \in \A$ be the $\|\cdot\|_{\cal A}$-limit of the sequence $(P_i)_i$. For each $i \in \N$ and each $x \in E$, we have
     \begin{align*}
         |(P - P_i)(x)| & = \lim_{ j \to \infty} |(P_j - P_i)(x)| \leq \lim_{j \to \infty} \sum_{k=i}^{j-1} |(P_{k+1} - P_k)(x)| \\
                        & \leq \lim_{j \to \infty} \sum_{k=i}^{j-1} |P_{k+1} - P_k|(|x|)  \leq \lim_{j \to \infty} \sum_{k=i}^{j-1} Q_k(|x|) \leq R_i(|x|),
     \end{align*}
     which implies that $P - P_i \leq R_i$ for every $i \in \N$. So, $P - P_i= R_i - (R_i - (P - P_i)) \in \A^r$, consequently $P \in \mathcal{A}^r$. Moreover, we have that $|P-P_i| \in \A^r$ (because $\A^r$ is a vector lattice), $R_i \in \A^+$ and $|P - P_i| \leq R_i$ for every $i \in \N$, hence
     $$ \norma{P - P_i}_{\A,r} \leq \norma{R_i}_{\A} \leq 2^{1-i} \longrightarrow 0 \mbox{ as } i \to \infty.$$
This proves that $(\mathcal{A}^r, \norma{\cdot}_{\A,r})$ is complete. It remains to check that $\A^r$ is an ideal in $\mathcal{P}^r(^n E; F)$.
    Taking $P \in \A^r$ and $Q \in \mathcal{P}^r(^n E; F)$ with $0 \leq |Q| \leq |P|$, since $|P|\in \A^r$ and the pair $(E, F)$ has the $\A$-domination property, we get $|Q| \in \A^r$. Moreover, from $0 \leq Q^+, Q^- \leq |Q|$ and from $\A$-domination property of the pair $(E, F)$ it follows that $Q^+, Q^- \in \A^r$, therefore $Q \in \A^r$.
\end{proof}

\begin{corollary}\label{qnh2}
       Let $E$ and $F$ be Banach lattices with $F$ Dedekind complete. If $\mathcal{A}$ is a closed subspace of $\mathcal{P}(^n E; F)$ such that $(E, F)$ satisfies the $\mathcal{A}$-domination property, then
     $$ \norma{P}_{\mathcal{A},r} := \inf \conj{\norma{Q}}{Q \in \mathcal{A}^+, Q \geq |P|} $$
     defines a complete lattice norm on $\mathcal{A}^r = \text{span}\{\mathcal{A}^+ \}$, that is,
   $(\mathcal{A}^r, \norma{\cdot}_{\A,r})$ is a Banach lattice. Moreover, $\norma{P}_{\A,r} = \norma{P}_{r}$ for every $P \in \mathcal{A}^r$ and $\A^r$ is an ideal in $\mathcal{P}^r(^n E; F)$.
\end{corollary}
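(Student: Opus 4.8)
The plan is to obtain Corollary~\ref{qnh2} as the special case of Theorem~\ref{teogeral} in which the norm $\norma{\cdot}_{\A}$ is taken to be the restriction to $\A$ of the usual sup norm $\norma{\cdot}$ on $\mathcal{P}(^n E; F)$. Since $(\mathcal{P}(^n E; F), \norma{\cdot})$ is a Banach space and $\A$ is closed in it by hypothesis, $(\A, \norma{\cdot}_{\A})$ is complete, so the only thing needed to start the machine is to verify conditions (I) and (II) of Theorem~\ref{teogeral} for this particular norm.

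Condition (I) is immediate, and in fact holds with equality, because $\norma{\cdot}_{\A}$ \emph{is} the sup norm on $\A$. For condition (II), I would argue as follows: let $P \in \A$ and $Q \in \A^+$ satisfy $|P(x)| \le Q(|x|)$ for every $x \in E$; since the norm of a Banach lattice is a lattice norm we have $\norma{|x|} = \norma{x}$, hence $|x| \in B_E$ whenever $x \in B_E$, and therefore for each $x \in B_E$ one has $\norma{P(x)} \le \norma{Q(|x|)} \le \sup_{y \in B_E} \norma{Q(y)} = \norma{Q}$; taking the supremum over $x \in B_E$ gives $\norma{P} \le \norma{Q}$, which is precisely (II).

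With (I) and (II) in hand, Theorem~\ref{teogeral} yields at once that $\norma{\cdot}_{\A,r}$ is a complete norm on $\A^r = \text{span}\{\A^+\}$ with $\norma{\cdot}_{\A,r} \ge \norma{\cdot}_r$, and, because $(E,F)$ satisfies the $\A$-domination property, that $(\A^r, \norma{\cdot}_{\A,r})$ is a Banach lattice, that $\norma{\cdot}_{\A} \le \norma{\cdot}_{\A,r}$, and that $\A^r$ is an ideal in $\mathcal{P}^r(^n E; F)$. It then remains to promote the inequality $\norma{P}_{\A,r} \ge \norma{P}_r$ to an equality for $P \in \A^r$. Here I would use that $\A^r$ is a vector sublattice of $\mathcal{P}^r(^n E; F)$ (proved inside Theorem~\ref{teogeral} precisely from the $\A$-domination property), so $|P| \in \A^r$; being positive, $|P| \in \A^+$, and plugging $Q = |P|$ into the infimum defining $\norma{P}_{\A,r}$ gives $\norma{P}_{\A,r} \le \norma{|P|} = \norma{P}_r$, the last equality being the description of the regular norm recalled at the start of the section.

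I do not expect a genuine obstacle: the result is a clean instance of Theorem~\ref{teogeral}, and the only steps requiring a moment of attention are the verification of (II), which hinges on the solidity of $B_E$, and the identification $\norma{|P|} = \norma{P}_r$ that pins the norm down. The one point to keep track of is that the $\A$-domination hypothesis is used twice — once through Theorem~\ref{teogeral} to make $\A^r$ a Banach lattice and an ideal, and once to guarantee $|P| \in \A$ so that $Q = |P|$ is a legitimate competitor in the last step.
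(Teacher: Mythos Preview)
Your proposal is correct and follows essentially the same approach as the paper's proof: apply Theorem~\ref{teogeral} with $\norma{\cdot}_{\A}$ equal to the sup norm (the paper simply asserts that conditions (I) and (II) hold for this choice, whereas you spell out the verification), and then obtain $\norma{P}_{\A,r} \le \norma{|P|} = \norma{P}_r$ by taking $Q = |P|$ in the infimum. Your write-up is in fact more careful than the paper's on two points --- the check of (II) via solidity of $B_E$, and the explicit observation that the $\A$-domination property is what ensures $|P| \in \A^+$ so that $Q = |P|$ is an admissible competitor.
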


\begin{proof} Since the usual sup norm $\|\cdot\|$ enjoys conditions (I) and (II), from Theorem \ref{teogeral} it follows that 
$(\mathcal{A}^r, \norma{\cdot}_{\A,r})$ is a Banach lattice and $\norma{P}_{\A, r} \geq \norma{P}_r$ for every $P \in \mathcal{A}^r$. To see the converse inequality, note that
    $ \norma{P}_r = \norma{|P|} \geq \norma{P}_{\A, r} $ holds for every $P \in \mathcal{A}^r$.
\end{proof}

 The next two examples follow immediately from the results proved thus far.

\begin{example}  \label{corcompacto}\rm
    Given Banach lattices $E$ and $F$ with $F$ Dedekind complete, let $\mathcal{P}_{\mathcal{K}}(^n E; F)$ denote the closed subspace of $\mathcal{P}(^n E; F)$ of all compact $n$-homogeneous polynomials from $E$ to $F$. By taking $\A = \mathcal{P}_{\mathcal{K}}(^n E; F)$ in Theorem \ref{teogeral}, we obtain that
    $$\norma{P}_{{\cal K},r} = \inf \conj{\norma{Q}}{Q \in \mathcal{P}_{\mathcal{K}}^+ (^n E; F), \, Q \geq |P|} $$
    defines a complete norm on $\mathcal{P}_{\mathcal{K}}^r(^n E; F)$.
    In addition, if $F$ is atomic with order continuous norm or an AL-space, then the pair $(E,F)$ satisfies the domination property for compact homogeneous polynomials (see \cite[Corollary 4.2 or Corollary 4.3]{libu}). By Theorem \ref{teogeral} we obtain that $(\mathcal{P}_{\mathcal{K}}^r(^n E; F), \norma{\cdot}_{{\cal K},r} )$ is a Banach lattice such that $\norma{P}_{{\cal K},r} = \norma{P}_r$ for every $P \in \mathcal{P}_{\mathcal{K}}^r(^n E; F)$. Moreover, under the condition on $F$ being atomic with order continuous norm or an AL-space, writing  $\mathcal{K}^r(\widehat{\otimes}_{s, |\pi|}^n E; F) := \mathcal{P}_{\mathcal{K}}^r(^1 \widehat{\otimes}_{s, |\pi|}^n E; F)$, the correspondence
    $$ P \in  \mathcal{P}_{\mathcal{K}}^r(^n E; F) \mapsto P^\otimes \in  \mathcal{K}^r(\widehat{\otimes}_{s, |\pi|}^n E; F)$$
    is an  isometric isomorphism and a lattice homomorphism. Indeed, since the correspondence (\ref{omku})
    is an isometric isomorphism and a lattice homomorphism,
     it is enough to check that, for a given regular $n$-homogeneous polynomial $P \colon E \to F$, $P \in \mathcal{P}_{\mathcal{K}}^r(^n E; F)$ if and only if $P^\otimes \in {\mathcal{K}}^r(^n E; F)$.  This follows immediately from \cite[Theorem 4.1 or Theorem 4.3]{libu} and from the fact that both $\mathcal{P}_{\mathcal{K}} ^r(^n E; F)$ and $\mathcal{K}^r(\widehat{\otimes}_{s, |\pi|}^n E; F)$ are Banach lattices.

    For $F$ atomic with order continuous norm, the linear case of this example was obtained in \cite[Theorem 5.5]{1998}.
\end{example}

\begin{example} \label{corfraccompacto}\rm
    Given Banach lattices $E$ and $F$ with $F$ Dedekind complete, let $\mathcal{P}_{\mathcal{W}}(^n E; F)$ denote the closed subspace of $\mathcal{P}(^n E; F)$ of all weakly compact $n$-homogeneous polynomials from $E$ to $F$. By taking $\A = \mathcal{P}_{\mathcal{W}}(^n E; F)$ in Theorem \ref{teogeral}, we obtain that
    $$ \norma{P}_{{\cal W},r} = \inf \conj{\norma{Q}}{Q \in \mathcal{P}_{\mathcal{W}}^+ (^n E; F), \, Q \geq |P|} $$
    defines a complete norm on $\mathcal{P}_{\mathcal{W}}^r(^n E; F)$. In addition, if $F$ has order continuous norm, then the pair $(E, F)$ satisfies the domination property for weakly compact homogeneous polynomials (see \cite[Corollary 3.3]{libu}). By Theorem \ref{teogeral} we obtain that $(\mathcal{P}_{\mathcal{W}}^r (^n E; F), \norma{\cdot}_{{\cal W},r})$ is a Banach lattice such that $\norma{P}_{{\cal W},r} = \norma{P}_r$ for every $P \in \mathcal{P}_{\mathcal{W}}^r (^n E; F)$. Moreover, under the condition of $F$ having order continuous norm, reasoning as in Example \ref{corcompacto} and     applying \cite[Theorem 3.2]{libu}, we have that the correspondence
    $$ P \in  \mathcal{P}_{\mathcal{W}}^r(^n E; F) \mapsto P^\otimes \in  \mathcal{W}^r(\widehat{\otimes}_{s, |\pi|}^n E; F):=\mathcal{P}_{\mathcal{W}}^r(^1 \widehat{\otimes}_{s, |\pi|}^n E; F)$$
    is an isometric isomorphism and a lattice homomorphism.
\end{example}

\section{Almost limited and $p$-compact polynomials}

In this section we give two more applications of the results proved in the previous section for classes of polynomials other than compact and weakly compact. We will consider almost limited polynomials and solid $p$-compact polynomials. The difference from the compact and weakly compact cases is that the results we need for these further classes of polynomials are not available in the literature. In particular, answers to the following questions are unknown:

$\bullet$ Under which conditions has a pair of Banach lattices $(E,F)$ the domination property for these classes?

$\bullet$ Is it true that a positive homogeneous polynomial belongs to the class if and only if its linearization does?

We start by considering the class of almost limited polynomials. Extending the linear notion from \cite{elbour} to the polynomial case, we say that an $n$-homogeneous polynomial $P \colon E \to F$ is said to be {\it almost limited} if $P(B_E)$ is an almost limited subset of $F$, that is, for every disjoint weak* null sequence $(y_n^\ast)_n \subset F^\ast$, $\norma{y_n^\ast \circ P} = \sup\limits_{x \in B_E} y_n^\ast(P(x)) \to 0$. We give a brief proof that the class $\mathcal{P}_{\rm al}(^n E; F)$ of all almost limited $n$-homogeneous polynomials $P \colon E \to F$ is a closed subspace of $\mathcal{P}(^n E; F)$. Let $(P_i)_i \subset \mathcal{P}_{\rm al}(^n E; F)$ be a sequence such that $\norma{P_i - P} \to 0$, let $(y_j^\ast)_j$ be a disjoint weak* null sequence and let $\varepsilon > 0$ be given. Since $\norma{P_i - P} \to 0$, there exists $i_0 \in \N$ such that
    $\norma{P_{i_0} - P} < \frac{\varepsilon}{2\sup_{j \in \N} \norma{y_j^\ast}}.$ Moreover, since $P_{i_0}$ is an almost limited polynomial, there exists $j_0 \in \N$ such that $\norma{y_j^\ast(P_{i_0})} < \varepsilon/2$ for every $j > j_0$. Hence
    \begin{align*}
        \sup_{x \in B_E} |y_j^\ast (P(x))| & = \norma{y_j^\ast \circ P}  \leq \norma{y_j^\ast \circ P - y_j^\ast \circ P_{i_0}} + \norma{y_j^\ast \circ P_{i_0}} \\
        & \leq \norma{y_j^\ast} \!\cdot\!\norma{P - P_{i_0}} + \norma{y_j^\ast(P_{i_0})} < \varepsilon
    \end{align*}
for every $j > j_0$, which proves that $P$ is almost limited.

The following lemma will be needed in the study of the domination problem for almost limited polynomials.

\begin{lemma} \label{lemaconv}
    If $A$ is an almost limited subset of $E$, then its convex hull $\text{co}(A)$ is also an almost limited set.
\end{lemma}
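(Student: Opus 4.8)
The plan is to unwind the definition of an almost limited set and exploit the linearity of the functionals appearing in it. Let $(y_j^\ast)_j \subset E^\ast$ be an arbitrary disjoint weak* null sequence; by definition, what I must show is that $\sup_{x \in \text{co}(A)} |y_j^\ast(x)| \to 0$ as $j \to \infty$, knowing that $\sup_{a \in A} |y_j^\ast(a)| \to 0$. Note first that $\text{co}(A)$ is bounded because $A$ is, so it does make sense to ask whether it is almost limited.

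The key step is the elementary identity $\sup_{x \in \text{co}(A)} |\varphi(x)| = \sup_{a \in A} |\varphi(a)|$, valid for every $\varphi \in E^\ast$. The inequality $\geq$ is immediate from $A \subseteq \text{co}(A)$. For $\leq$, take $x \in \text{co}(A)$ and write $x = \sum_{k=1}^{m} \lambda_k a_k$ with $a_k \in A$, $\lambda_k \geq 0$ and $\sum_{k=1}^{m} \lambda_k = 1$; then $|\varphi(x)| = \bigl|\sum_{k=1}^{m} \lambda_k \varphi(a_k)\bigr| \leq \sum_{k=1}^{m} \lambda_k |\varphi(a_k)| \leq \sup_{a \in A} |\varphi(a)|$, and taking the supremum over all such $x$ gives the claim.

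Applying this identity with $\varphi = y_j^\ast$ for each $j \in \N$, I obtain $\sup_{x \in \text{co}(A)} |y_j^\ast(x)| = \sup_{a \in A} |y_j^\ast(a)|$, and the right-hand side tends to $0$ by the hypothesis that $A$ is almost limited. Hence $\text{co}(A)$ is almost limited, which is what was to be proved.

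I do not expect any real obstacle here: the whole argument rests on the fact that a linear functional ``does not see'' the difference between a set and its convex hull as far as the supremum of its modulus is concerned. The only point deserving a moment's care is that the definition of an almost limited set is phrased in terms of $|y_j^\ast(\cdot)|$, but the convexity estimate above is carried out directly on absolute values, so this causes no difficulty. This is in sharp contrast with the stability of the \emph{solid} (or solid convex) hull under almost limitedness, which is genuinely nontrivial — but that is not what is being asked in this lemma.
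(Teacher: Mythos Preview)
Your proof is correct. The identity $\sup_{x\in\mathrm{co}(A)}|\varphi(x)|=\sup_{a\in A}|\varphi(a)|$ for any bounded linear functional $\varphi$ is exactly what is needed, and your derivation of it via convex combinations is clean and complete.

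Your route differs from the paper's. The paper invokes an external characterization (a subset is almost limited if and only if its image under every disjoint operator into $c_0$ is relatively compact), then applies the classical fact that the convex hull of a relatively compact set in a Banach space is relatively compact, together with $T(\mathrm{co}(A))\subset\mathrm{co}(T(A))$. Your argument is more elementary and fully self-contained: it uses nothing beyond the definition of almost limited and the linearity of the functionals $y_j^\ast$. What the paper's approach buys is a reusable operator-theoretic viewpoint that fits the surrounding material; what yours buys is a one-line proof with no outside references. For this particular lemma your direct argument is the sharper tool.
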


\begin{proof}
      We recall from \cite[Proposition 2.3]{galmir1} that a subset $A$ of a Banach lattice $E$ is almost limited if and only if $T(A)$ is a relatively compact subset of $c_0$ for every disjoint operator $T \colon E \to c_0$ (i.e. the weak* null sequence $(x_n^\ast)_n \subset E^\ast$  that defines $T$ is disjoint). If $T \colon E \to c_0$ is a disjoint operator, then $T(A)$ is a relatively compact subset of $c_0$, thus $\text{co}(T(A))$ is also a relatively compact subset of $c_0$ \cite[Theorem 3.4]{alip}. Since $T(\text{co}(A)) \subset \text{co}(T(A))$, we conclude that $T(\text{co}(A))$ is relatively compact, hence $\text{co}(A)$ is almost limited.
\end{proof}

In order to solve the linearization problem and the domination problem for almost limited polynomials, we recall that a Banach lattice $E$ is said to have {\it property (d)} if $|x_n^\ast| \cvfe 0$  in $E^\ast$ for every disjoint weak* null sequence $(x_n^\ast)_n$ (see, e.g., \cite[Definition 1]{elbour}).

\begin{theorem} \label{domalmostlimited}
    Let $E$ and $F$ be two Banach lattices with $F$ having property (d). Then, a positive $n$-homogeneous polynomial $P \colon E \to F$ is almost limited if and only if its linearization $P^\otimes$ is an almost limited operator. Besides, if $P \colon E \to F$ is an almost limited  positive $n$-homogeneous polynomial, then $[0, P]$ is contained in the class of the almost limited polynomials.
\end{theorem}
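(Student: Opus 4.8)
The plan is to handle the two assertions separately, using the linearization correspondence (\ref{omku}) together with Lemma \ref{lemaconv} as the main tools. For the linearization equivalence, I would first recall that $P(x) = P^\otimes(\otimes^n x)$ for all $x \in E$, so that $P(B_E) = P^\otimes(\otimes^n[B_E])$. Since $\otimes^n[B_E] \subseteq B_{\widehat{\otimes}_{s,|\pi|}^n E}$, if $P^\otimes$ is almost limited then $P^\otimes(B_{\widehat{\otimes}_{s,|\pi|}^n E})$ is almost limited and hence so is its subset $P(B_E)$; this gives one direction with no use of property (d). For the converse, suppose $P$ is an almost limited positive polynomial. The key point is that the closed unit ball of $\widehat{\otimes}_{s,|\pi|}^n E$ is, by definition of the positive projective symmetric tensor norm, the closed solid convex hull of $\otimes^n[B_E]$ (equivalently $B_{\widehat{\otimes}_{s,|\pi|}^n E} = \overline{\mathrm{sco}}(\otimes^n[B_E])$). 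Applying $P^\otimes$ and using that $P^\otimes$ is positive (since $P$ is), the image $P^\otimes(\mathrm{sco}(\otimes^n[B_E]))$ is controlled, via the solid hull, by $\mathrm{sol}(\mathrm{co}(P(B_E) \cup (-P(B_E))))$ inside $F$. Now Lemma \ref{lemaconv} shows $\mathrm{co}(P(B_E)\cup(-P(B_E)))$ is almost limited, and property (d) of $F$ is exactly what is needed to pass from an almost limited set to its solid hull (this is the lattice analogue of the stability of limited sets under solid hull under property (d), and is where (d) enters decisively). Finally one passes to the norm closure, using that the almost limited sets form a closed class, to conclude $P^\otimes(B_{\widehat{\otimes}_{s,|\pi|}^n E})$ is almost limited, i.e.\ $P^\otimes$ is almost limited.

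For the domination assertion, let $P \colon E \to F$ be an almost limited positive $n$-homogeneous polynomial and let $0 \le R \le P$ in $\mathcal{P}^r(^n E; F)$. Taking linearizations, $0 \le R^\otimes \le P^\otimes$ as regular operators on $\widehat{\otimes}_{s,|\pi|}^n E$, and $P^\otimes$ is almost limited by the first part. Thus it suffices to prove the linear domination statement: if $S, T$ are positive operators on a Banach lattice $X$ into $F$ (with $F$ having property (d)) with $0 \le S \le T$ and $T$ almost limited, then $S$ is almost limited. For a disjoint weak* null sequence $(y_m^\ast)_m$ in $F^\ast$, property (d) gives $|y_m^\ast| \cvfe 0$; then for $x \in B_X^+$ one estimates $|y_m^\ast(S(x))| \le |y_m^\ast|(S(x)) \le |y_m^\ast|(T(x)) = \sup\{ z^\ast(T(x)) : |z^\ast| \le |y_m^\ast|\}$, and since $|y_m^\ast| \to 0$ weak* and $T(B_X)$ is almost limited one deduces $\sup_{x \in B_X}|y_m^\ast(S(x))| \to 0$ (handling general $x \in B_X$ via $x = x^+ - x^-$ with $|x^\pm| \le |x|$). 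Translating back through the linearization, $R$ is almost limited, which is the claim.

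The main obstacle I anticipate is the converse direction of the linearization equivalence, specifically the transfer of the ``almost limited'' property from $P(B_E)$ to the solid convex hull $\overline{\mathrm{sco}}(P(B_E))$ that describes $P^\otimes(B_{\widehat{\otimes}_{s,|\pi|}^n E})$. Lemma \ref{lemaconv} takes care of convex hulls, but the solid-hull step is genuinely where property (d) is used, and it requires care with disjoint weak* null sequences: one must show that if $A$ is almost limited and $(y_m^\ast)$ is disjoint weak* null, then $\sup_{y \in \mathrm{sol}(A)} |y_m^\ast(y)| \le \sup_{a \in A} |y_m^\ast|(|a|) \to 0$, where the convergence relies on $|y_m^\ast| \cvfe 0$ together with almost limitedness of $A$. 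Once this solid-hull stability lemma is in place (it may be worth isolating it, or it may already be implicit in the property-(d) literature such as \cite{elbour, galmir1}), both parts of the theorem follow cleanly from the linearization machinery of Section 2.
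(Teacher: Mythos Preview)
Your plan is correct and follows essentially the same route as the paper: the easy direction $P^\otimes$ almost limited $\Rightarrow P$ almost limited via the inclusion $\otimes^n[B_E]\subseteq B_{\widehat{\otimes}_{s,|\pi|}^n E}$; the harder direction via Lemma~\ref{lemaconv} plus the solid-hull stability of almost limited sets under property~(d); and the domination statement by passing to linearizations and invoking linear domination. The paper packages the three nontrivial ingredients as citations---the containment $P^\otimes(B_{\widehat{\otimes}_{s,|\pi|}^n E})\subseteq \mathrm{sco}(P(B_E^+))$ is \cite[Lemma~3.1]{libu}, the solid-hull stability is \cite[Proposition~2.2]{loumir1}, and the linear domination is \cite[Corollary~3]{elbour}---whereas you sketch each of these directly, but the architecture is the same. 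One small technical remark: your claim that $P^\otimes(\mathrm{sco}(\otimes^n[B_E]))\subseteq \mathrm{sol}(\mathrm{co}(P(B_E)\cup(-P(B_E))))$ is most cleanly justified by first passing to $\otimes^n[B_E^+]$ (using $|\otimes^n x|=\otimes^n|x|$), since positive operators do not send solid hulls to solid hulls in general; this is exactly how the cited lemma from \cite{libu} proceeds, and your argument goes through once phrased that way.
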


\begin{proof}
    If $P \colon E \to F$ is an almost limited positive $n$-homogeneous polynomial, then $P(B_E^+)$ is an almost limited subset of $F$, so $\text{co}(P(B_E^+))$ is also an almost limited subset of $F$ by Lemma \ref{lemaconv}. Since $F$ has property (d), we obtain from \cite[Proposition 2.2]{loumir1} that $\text{sco}(P(B_E^+)) = \text{sol}(\text{co}(P(B_E^+)))$ is also an almost limited set. Since $ P^\otimes (B_{\hat{\otimes}_{s, |\pi|}^n E})\subset \text{sco}(P(B_E^+))$ (see \cite[Lemma 3.1]{libu}), we get that $P^\otimes (B_{\hat{\otimes}_{s, |\pi|}^n E})$ is an almost limited set, which implies that $P^\otimes$ is an almost limited operator.
    Conversely, assume that $P^\otimes \colon \hat{\otimes}_{s, |\pi|}^n E \to F$ is an almost limited operator. To prove that $P(B_E)$ is an almost limited subset of $F$, let $(y_n^\ast)_n$ be a disjoint weak* null sequence in $F^\ast$. As $P^\otimes (B_{\hat{\otimes}_{s, |\pi|}^n E})$ is an almost limited subset of $F$ and $\otimes^n[B_E] \subset B_{\hat{\otimes}_{s, |\pi|}^n E}$, we have that
    $$ \sup_{x \in B_E} |y_n^\ast (P(x))| = \sup_{x \in B_E} |y_n^\ast (P^\otimes (\otimes^n(x)))| \leq \sup_{z \in B_{\hat{\otimes}_{s, |\pi|}^n E}} |y_n^\ast (P^\otimes (z))| \to 0, $$
    which yields that $P(B_E)$ is almost limited set, proving that $P$ is an almost limited polynomial.

    Assume now that $P \colon E \to F$ is an almost limited positive $n$-homogeneous polynomial. If $Q \colon E \to F$ is a positive $n$-homogeneous polynomial with $Q \leq P$, then $0 \leq Q^\otimes \leq P^\otimes$. Since $F$ has property (d) and $P^\otimes$ is an almost limited operator, we get from \cite[Corollary 3]{elbour} that $Q^\otimes$ is an almost limited operator, hence $Q$ is an almost limited polynomial.
\end{proof}

Now we can apply Corollary \ref{qnh2} to the class of almost limited polynomials.

\begin{example} \label{coralmostlimited} \rm
Let $E$ and $F$ be two Banach lattices with $F$ Dedekind complete.  Since Dedekind complete Banach lattices have property (d), we get from Theorem \ref{domalmostlimited} that the pair $(E,F)$ satisfies the domination property for almost limited homogeneous polynomials. By taking $\A = \mathcal{P}_{\rm al}(^n E; F)$  in Corollary \ref{teogeral} we obtain that $\mathcal{P}_{\rm al}^r(^n E; F)$ is a Banach lattice with the norm
$$ \norma{P}_{{\rm al},r} = \inf \conj{\norma{Q}}{Q \in \mathcal{P}_{\rm al}^+(^n E; F), \, Q \geq |P|}. $$
Moreover, 
$\norma{P}_{{\rm al},r} = \norma{P}_r$ for every $P \in \mathcal{P}_{\rm al}^r(^n E; F)$. Following the same argument from Example \ref{corcompacto}, we obtain from Theorem \ref{domalmostlimited} that  the correspondence $$ P \in \mathcal{P}_{\rm al}^r(^n E; F) \mapsto P^\otimes \in  \mathcal{L}_{\rm al}^r(\hat{\otimes}_{s, |\pi|}^n E; F)$$
    is an isomorphism between Banach lattices, where $\mathcal{L}_{\rm al}^r(E; F) := \mathcal{P}_{\rm al}^r(^1 E; F)$.
\end{example}

The second class of polynomials we consider in this section is a lattice counterpart of the well studied class of $p$-compact polynomials, see, e.g. \cite{aronracsam, aronruedacontemp, aronruedaprims, ewertonlama}. Given $1 \leq p < \infty$, let $p^*$ be given by $\frac{1}{p} + \frac{1}{p^*} = 1$. For a Banach space $E$, we denote by $\ell_p(E)$ the space of absolutely $p$-summable $E$-valued sequences endowed with its usual norm $\|\cdot\|_p$, and by $\ell_p^w(E)$ the space of weakly $p$-summable $E$-valued sequences. According to \cite{sinhakarn}, a subset $K$ of a Banach space $E$ is {\it relatively $p$-compact} if there is a sequence $(x_j)_j \in \ell_p(E)$ such that $K \subseteq \left\{\sum\limits_{j=1}^\infty \lambda_j x_j : (\lambda_j)_j \in B_{\ell_{p^*}}\right\} = : p\mbox{-conv}\{(x_j)_j\}$. According to \cite{aronruedacontemp}, a polynomial $P \in {\cal P}(^nE;F)$ between Banach spaces is {\it $p$-compact} if $P(B_E)$ is a relatively $p$-compact subset of $F$.  The set ${\cal P}_{{\cal K}_p}(^n E;F)$ of $p$-compact $n$-homogeneous polynomials from $E$ to $F$ is a Banach space with the norm
$$\|P\|_{{\cal K}_p} = \inf\left\{\|(x_j)_j\|_p  : P(B_E) \subseteq p\mbox{-conv}\{(x_j)_j\}\right\}. $$
The linear case $n= 1$ recovers the Banach ideal $({\cal K}_p, \|\cdot\|_{{\cal K}_p})$ of $p$-compact linear operators.


In the lattice environment, we consider the following slightly larger class: An $n$-homogeneous polynomial $P\colon E \to F$ between Banach lattices is said to be {\it solid $p$-compact} if there exists a sequence $(y_j)_j \in \ell_p(F)$ such that $P(B_E) \subseteq \sol{p\mbox{-conv}\{(y_j)_j\}}$. It is easy to check that the collection ${\cal P}_{|\mathcal{K}_p|}(^n E; F)$ of all solid $p$-compact polynomials from $E$ into $F$ is a linear subspace of $\mathcal{P}(^n E; F)$ containing ${\cal P}_{\mathcal{K}_p}(^n E; F)$. For $P \in \mathcal{P}_{|\mathcal{K}_p|}(^n E; F)$, we define
$$ \|P\|_{|{\cal K}_p|} = \inf\left\{\|(x_j)_j\|_p  : P(B_E) \subseteq \sol{p\mbox{-conv}\{(x_j)_j\}}\right\}. $$

Next lemma is a lattice version of \cite[Corollary 3.6(a)]{kim2019} (see also \cite[Lemma 2.2]{kimfenicae}). We recall that a Banach lattice contains no copy of $c_0$ if, and only if, it is a KB-space.

\begin{lemma} \label{lema}
   Let $1 < p < \infty$, $P \in \mathcal{P}(^nE; F)$ and $(y_j)_j \in \ell_p^w(F)$ be given. Then, $P(B_E) \subseteq {\rm sol}({p\mbox{-{\rm conv}}\{ (y_j)_j \}})$ if and only if $|y^\ast|(|P(x)|) \leq \norma{(|y^\ast|(|y_j|))_j}_p$ for all $x \in B_E$ and $y^\ast \in F^\ast$.  For $p=1$, the result holds whenever $E$ contains no copy of $c_0$.
\end{lemma}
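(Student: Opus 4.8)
The statement is an equivalence with a side condition, so the plan is to prove the two implications separately, keeping track of where the hypothesis on $E$ (no copy of $c_0$ for $p=1$, nothing extra for $p>1$) is actually used. The key analytic fact underlying everything is the duality between $\sol{p\mbox{-conv}\{(y_j)_j\}}$ and the functional inequality: a point $z \in F$ lies in $\sol{p\mbox{-conv}\{(y_j)_j\}}$ precisely when $|z|$ is dominated (in order) by some $\sum_j |\lambda_j|\,|y_j|$ with $(\lambda_j)_j \in B_{\ell_{p^*}}$, and testing this against a positive functional $|y^*|$ turns it into a scalar estimate controlled by Hölder. So I would first isolate this as the core computation and then feed it into both directions.

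For the forward implication, assume $P(B_E) \subseteq \sol{p\mbox{-conv}\{(y_j)_j\}}$. Fix $x \in B_E$ and $y^* \in F^*$. By hypothesis there is $z \in p\mbox{-conv}\{(y_j)_j\}$, say $z = \sum_j \lambda_j y_j$ with $(\lambda_j)_j \in B_{\ell_{p^*}}$, such that $|P(x)| \leq |z|$. Then $|y^*|(|P(x)|) \leq |y^*|(|z|) = |y^*|\big(|\sum_j \lambda_j y_j|\big) \leq |y^*|\big(\sum_j |\lambda_j|\,|y_j|\big) = \sum_j |\lambda_j|\, |y^*|(|y_j|) \leq \|(\lambda_j)_j\|_{p^*}\,\|(|y^*|(|y_j|))_j\|_p \leq \|(|y^*|(|y_j|))_j\|_p$ by Hölder, which is exactly the desired inequality. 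The one technical point here is justifying $|y^*|\big(|\sum_j \lambda_j y_j|\big) \leq \sum_j |\lambda_j|\, |y^*|(|y_j|)$ and the interchange of $|y^*|$ with the (possibly infinite) sum: since $(y_j)_j \in \ell_p^w(F) \subseteq \ell_p(F)$ is not assumed, convergence of $\sum_j \lambda_j y_j$ must be argued — for $p>1$ and $(\lambda_j)_j \in \ell_{p^*}$ the weak $p$-summability gives unconditional convergence, so this is fine; for $p=1$, $(\lambda_j)_j \in \ell_\infty$ and one only gets weak convergence in general, which is where no copy of $c_0$ enters (a KB-space is weakly sequentially complete, so the series converges in norm). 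This is the step I expect to be the main obstacle: getting the $p=1$ case to work requires invoking that $F$, being a KB-space, makes $p\mbox{-conv}$ of a weakly $1$-summable sequence behave well.

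For the converse, assume $|y^*|(|P(x)|) \leq \|(|y^*|(|y_j|))_j\|_p$ for all $x \in B_E$, $y^* \in F^*$, and I want $P(x) \in \sol{p\mbox{-conv}\{(y_j)_j\}}$ for each fixed $x \in B_E$. The natural route is a Hahn–Banach / separation argument: the set $C := \sol{p\mbox{-conv}\{(y_j)_j\}}$ is convex and solid, and by the lattice form of the bipolar theorem a point $w$ fails to lie in the closure of $C$ only if some positive functional separates it, i.e. there is $y^* \geq 0$ with $y^*(|w|) > \sup\{y^*(|v|) : v \in C\}$. One computes $\sup\{y^*(|v|) : v \in C\} = \sup\{\sum_j |\lambda_j| y^*(|y_j|) : (\lambda_j)_j \in B_{\ell_{p^*}}\} = \|(y^*(|y_j|))_j\|_p$, so the separation would read $y^*(|P(x)|) > \|(y^*(|y_j|))_j\|_p$, contradicting the hypothesis (applied with this $y^* \geq 0$, for which $|y^*| = y^*$). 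Hence $P(x)$ lies in the norm-closure of $C$; the remaining point is that $C$ is already norm-closed when $(y_j)_j \in \ell_p(F)$, but since we only have $(y_j)_j \in \ell_p^w(F)$ one should either show $C$ is closed under the stated hypotheses or absorb the closure — again, for $p=1$ the KB-space hypothesis on $F$ is what guarantees $p\mbox{-conv}\{(y_j)_j\}$ is closed (its defining series converge in norm and the set is the continuous image of the weakly compact ball $B_{\ell_\infty}$... more carefully, of $B_{c_0}$-type arguments), whereas for $p>1$ the ball $B_{\ell_{p^*}}$ is weakly compact and the map $(\lambda_j)_j \mapsto \sum_j \lambda_j y_j$ is weak-to-weak continuous, so $p\mbox{-conv}\{(y_j)_j\}$ is weakly compact, hence norm-closed, and taking the solid hull preserves this. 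I would present the $p>1$ argument cleanly first and then indicate the modification for $p=1$ using that $F$ is a KB-space.
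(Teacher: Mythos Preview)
Your plan matches the paper's proof: the forward direction is exactly the H\"older computation you describe, and the converse is a separation argument via a positive functional, reducing to the weak closedness of $\sol{p\text{-conv}\{(y_j)_j\}}$. Two points to sharpen. First, the no-$c_0$ hypothesis is \emph{not} needed in the forward direction: once $P(x)\in\sol{p\text{-conv}\{(y_j)_j\}}$, the dominating series $\sum_j a_j y_j$ converges by definition, and after applying $|y^*|$ the scalar series $\sum_j |a_j|\,|y^*|(|y_j|)$ is a sum of nonnegative reals bounded by $\|(|y^*|(|y_j|))_j\|_p<\infty$ via H\"older, so no convergence issue arises. The hypothesis enters only in the converse, to make $p\text{-conv}\{(y_j)_j\}$ weakly compact when $p=1$ (for $p>1$ it is the weak-to-weak continuous image of the weakly compact $B_{\ell_{p^*}}$, as you say). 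Second, your claim that ``taking the solid hull preserves'' closedness is the nontrivial step: the solid hull of a closed (or even weakly compact) set need not be closed in general. The paper handles this by invoking a result of Oikhberg--Tursi that the solid hull of a weakly compact set is weakly closed when the topology is sufficiently rich, and then applies their separation theorem for positive-solid weakly closed sets to produce the positive functional. Your ``lattice bipolar'' intuition is correct in spirit, but you will need these specific facts (or equivalents) to close the argument.
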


\begin{proof} Assume that $P(B_E) \subseteq {\rm sol}({p\mbox{-{\rm conv}}\{ (y_j)_j \}})$ and let $y^\ast \in F^\ast$ and $x \in B_E$ be given. Since $P(x) \in \sol{p\mbox{-conv}\{ (y_j)_j \}}$, there exists $(a_j)_j \in B_{\ell_{p^\ast}}$ such that $|P(x)| \leq \left |\sum\limits_{j=1}^\infty a_j y_j\right | \leq \sum\limits_{j=1}^\infty |a_j|\!\cdot\! |y_j|$. Thus,
    \begin{align*}
        |y^\ast|(|P(x)|) & \leq |y^\ast| \left ( \sum_{j=1}^\infty |a_j| |y_j| \right ) = \sum_{j=1}^\infty |a_j| (|y^\ast|(|y_j|)) \\
        & \leq \norma{(a_j)_j}_{p^\ast}\!\cdot\! \norma{(|y^\ast|(|y_j|))_j}_p \leq \norma{(|y^\ast|(|y_j|))_j}_p.
    \end{align*}
    Conversely, assume that $|y^\ast|(|P(x)|) \leq \norma{(|y^\ast|(|y_j|))_j}_p$ for all $x \in B_E$ and $y^\ast \in F^\ast$ and suppose that there exists $x_0 \in B_E$ such that $P(x_0) \notin \sol{p\mbox{-conv}\{ (y_j)_j \}}$. In particular, $|P(x_0)| \notin (\sol{p\mbox{-conv}\{ (y_j)_j \}})^+$. Letting $A := (\sol{p\mbox{-conv}\{ (y_j)_j \}})^+$, we have: \\
    $\bullet$ $A$ is positive-solid: If $y \in F^+$ and $a \in A$ are such that $y \leq a$, then $y \in \sol{p\mbox{-conv}\{ (y_j)_j \}}$, therefore $y \in A$. \\
    $\bullet$ $A$ is norm bounded: An easy application of H\"older's inequality gives that ${p\mbox{-conv}}\{ (y_j)_j \}$ is norm bounded and so is $A$, as  the solid hull of a norm bounded set is norm bounded as well.
   \\
    $\bullet$ $A$ is weakly closed: First consider $0<p<1$. Since the linear operator $(a_j)_j \in \ell_{p^\ast} \mapsto \sum\limits_{j=1}^\infty a_j y_j \in F$ is weakly continuous, it maps weakly compact sets in $\ell_{p^\ast}$ into weakly compact sets in $F$. Thus, since $B_{\ell_{p^\ast}}$ is weakly compact for $1<p<\infty$, we obtain that ${p\mbox{-conv}}\{ (y_j)_j \}$ is  weakly compact. For $p=1$, weakly compactness of ${p\mbox{-conv}}\{ (y_j)_j \}$  is equivalent to $E$  not containing a copy of $c_0$ (see  \cite[Proposition 2.2]{kimcorrigendum}).   Since the weak topology is sufficiently rich and $p\mbox{-conv}\{ (y_j)_j \}$ is weakly compact (under the assumption of $E$ not containing a copy of $c_0$ if $p=1$), we obtain from \cite[Lemma 19.14]{oikhbergtursi} that $\sol{p\mbox{-conv}\{ (y_j)_j \}}$ is weakly closed. Then $A = \sol{p\mbox{-conv}\{ (y_j)_j \}} \cap F^+$ is also weakly closed.

  By \cite[Proposition 19.7]{oikhbergtursi} there exists $0 \leq y^\ast \in F^\ast$ such that $y^\ast(|P(x_0)|) > \sup\limits_{y \in A} y^\ast(y)$. However,
    \begin{align*}
        y^\ast(|P(x_0)|) & \leq \norma{(y^\ast(|y_j|))_j}_p = \sup_{\varphi \in B_{\ell_{p}^\ast}} |\varphi((y^\ast(|y_j|))_j)| \\
        & = \sup_{(\alpha_i)_i \in B_{\ell_{p^\ast}}} \left | \sum_{j=1}^\infty \alpha_j \, y^\ast(|y_j|) \right |  \leq \sup_{(\alpha_i)_i \in B_{\ell_{p^\ast}}} \sum_{j=1}^\infty y^\ast(|\alpha_j y_j|) \\
       & = \sup_{(\alpha_i)_i \in B_{\ell_{p^\ast}}} y^\ast\left( \sum_{j=1}^\infty |\alpha_j| |y_j|\right) \leq \sup_{y \in A} y^\ast(y).
    \end{align*}
 This contradiction completes the proof.

\end{proof}

\begin{proposition}\label{pcompactprop} Let $1 < p < \infty$. For all Banach lattices $E$ and $F$, $\|\cdot\|_{|{\cal K}_p|}$ is a complete norm on ${\cal P}_{|\mathcal{K}_p|}(^n E; F)$ satisfying conditions {\rm  (I)} and {\rm (II)} of Theorem {\rm \ref{teogeral}}. The following inclusions are continuous:
$${\cal P}_{\mathcal{K}_p}(^n E; F) \subseteq {\cal P}_{|\mathcal{K}_p|}(^n E; F) \subseteq {\cal P}(^n E; F). $$
For $p=1$, the result holds whenever $E$ contains no copy of $c_0$.
\end{proposition}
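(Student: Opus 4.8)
The plan is to verify each assertion of Proposition \ref{pcompactprop} in turn, leaning heavily on Lemma \ref{lema} to translate the geometric condition $P(B_E)\subseteq \sol{p\mbox{-conv}\{(x_j)_j\}}$ into the scalar inequality $|y^\ast|(|P(x)|)\leq \|(|y^\ast|(|y_j|))_j\|_p$. First I would check that $\|\cdot\|_{|{\cal K}_p|}$ is a norm on ${\cal P}_{|\mathcal{K}_p|}(^n E;F)$. Absolute homogeneity is immediate from rescaling the defining sequence $(x_j)_j$; subadditivity follows by concatenating (after suitable splitting by $\varepsilon$) two representing sequences for $P_1$ and $P_2$ and using that $\sol{p\mbox{-conv}\{(x_j)_j\}}+\sol{p\mbox{-conv}\{(z_j)_j\}}\subseteq \sol{p\mbox{-conv}\{(x_j)_j)\cup(z_j)_j\}}$ up to the usual interleaving, together with $\|(x_j)_j \cup (z_j)_j\|_p \le \|(x_j)_j\|_p + \|(z_j)_j\|_p$ in the appropriate sense (more precisely, $\|(x_j^p+z_j^p)^{1/p}\|$-type estimates, but one only needs the crude bound obtained from forming the union sequence). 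Definiteness: if $\|P\|_{|{\cal K}_p|}=0$, pick representing sequences with $\|(x_j^{(k)})_j\|_p\to 0$; by the continuous inclusion into ${\cal P}(^n E;F)$ (established next) we get $\|P\|=0$, hence $P=0$.

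Next I would establish the two continuous inclusions, which also yield condition (I). For the right-hand inclusion, if $P(B_E)\subseteq\sol{p\mbox{-conv}\{(x_j)_j\}}$ then for $x\in B_E$, writing $|P(x)|\le\sum_j|a_j|\,|x_j|$ with $(a_j)_j\in B_{\ell_{p^\ast}}$, Hölder gives $\|P(x)\|\le\sum_j|a_j|\,\|x_j\|\le\|(x_j)_j\|_p$, so $\|P\|\le\|P\|_{|{\cal K}_p|}$; this is exactly (I). The left-hand inclusion ${\cal P}_{\mathcal{K}_p}(^n E;F)\subseteq{\cal P}_{|\mathcal{K}_p|}(^n E;F)$ is set-theoretic since $p\mbox{-conv}\{(x_j)_j\}\subseteq\sol{p\mbox{-conv}\{(x_j)_j\}}$, and $\|P\|_{|{\cal K}_p|}\le\|P\|_{{\cal K}_p}$ for the same reason, so that inclusion is contractive. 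For condition (II), suppose $P\in{\cal P}_{|\mathcal{K}_p|}(^n E;F)$, $Q\in{\cal P}_{|\mathcal{K}_p|}^+(^n E;F)$ and $|P(x)|\le Q(|x|)$ for every $x\in E$; fix a representing sequence $(x_j)_j$ for $Q$ with $\|(x_j)_j\|_p$ close to $\|Q\|_{|{\cal K}_p|}$, so by Lemma \ref{lema} (applied to $Q$, noting $(x_j)_j\in\ell_p(F)\subseteq\ell_p^w(F)$) we have $|y^\ast|(Q(|x|))\le\|(|y^\ast|(|x_j|))_j\|_p$ for all $x\in B_E$, $y^\ast\in F^\ast$; combining with $|y^\ast|(|P(x)|)\le|y^\ast|(Q(|x|))$ and the converse direction of Lemma \ref{lema} applied to $P$ gives $P(B_E)\subseteq\sol{p\mbox{-conv}\{(x_j)_j\}}$, hence $\|P\|_{|{\cal K}_p|}\le\|(x_j)_j\|_p$; letting the representation for $Q$ be near-optimal yields $\|P\|_{|{\cal K}_p|}\le\|Q\|_{|{\cal K}_p|}$.

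Completeness is the step I expect to be the main obstacle, and I would handle it by the standard absolutely-summable-series criterion. Given a sequence $(P_k)_k$ with $\sum_k\|P_k\|_{|{\cal K}_p|}<\infty$, choose for each $k$ a representation $P_k(B_E)\subseteq\sol{p\mbox{-conv}\{(x_j^{(k)})_j\}}$ with $\|(x_j^{(k)})_j\|_p\le\|P_k\|_{|{\cal K}_p|}+2^{-k}$. By (I) the series $\sum_k P_k$ converges in $\|\cdot\|$ to some $P\in{\cal P}(^n E;F)$ (using that ${\cal P}(^n E;F)$ is complete in the sup norm). It remains to produce a single $\ell_p(F)$ sequence representing $P$ and its tails: the natural candidate is a rescaled concatenation of the families $(x_j^{(k)})_{j,k}$, i.e. one checks that $\sum_k\sol{p\mbox{-conv}\{(x_j^{(k)})_j\}}$ is contained in $\sol{p\mbox{-conv}\{(\text{rearranged union})\}}$ with $\ell_p$-norm controlled by $\sum_k\|(x_j^{(k)})_j\|_p$; this is precisely where the convexity/solidity bookkeeping is delicate, since one must verify that a $\sum_k(a^{(k)}_j)_j$-combination with each $(a^{(k)}_j)_j\in B_{\ell_{p^\ast}}$ can be rewritten as a single $B_{\ell_{p^\ast}}$-combination of the scaled union sequence $(\,\|P_k\|^{-1/p^\ast}$-type weights... $)$, which is the discrete analogue of the fact that $\ell_p$-sums of $p$-compact sets are $p$-compact. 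Applying the same argument to the tails $\sum_{k\ge m}P_k$ shows $\|P-\sum_{k<m}P_k\|_{|{\cal K}_p|}\to 0$, giving completeness. Finally, the case $p=1$: every invocation of Lemma \ref{lema} above requires $E$ to contain no copy of $c_0$, and with that hypothesis in force all the arguments go through verbatim; I would simply remark that the proof is identical once Lemma \ref{lema} is available for $p=1$.
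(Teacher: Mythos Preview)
Your proposal is correct and follows essentially the same route as the paper: condition (I) via H\"older, completeness via the absolutely-summable-series criterion with the $\|P_k\|^{-1/p^\ast}$-rescaled concatenation (exactly the paper's trick), and the same continuous inclusions. Two minor differences worth noting: the paper derives the triangle inequality \emph{as a byproduct} of the series argument rather than separately (your direct concatenation sketch is shaky as written---the inclusion $\sol{p\text{-conv}\{(x_j)\}}+\sol{p\text{-conv}\{(z_j)\}}\subseteq\sol{p\text{-conv}\{(x_j)\cup(z_j)\}}$ is false without the rescaling you only introduce later), and for condition (II) the paper bypasses Lemma~\ref{lema} entirely by observing directly that $|P(x)|\le Q(|x|)\in\sol{p\text{-conv}\{(y_j)\}}$ forces $P(x)\in\sol{p\text{-conv}\{(y_j)\}}$.
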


\begin{proof} We start by checking condition (I) of Theorem {\rm \ref{teogeral}}. Given $P \in {\cal P}_{|\mathcal{K}_p|}(^n E; F)$, let  $(x_j)_j \in \ell_p(F)$ be   such that $ P(B_E) \subseteq \sol{p\mbox{-conv}\{(x_j)_j\}}$. For every $x \in B_E$, there exists $(a_j)_j \in B_{\ell_{p^\ast}}$ such that $|P(x)| \leq \left|\sum\limits_{j=1}^\infty a_j x_j\right|$.  Using that the norm of $F$ is a lattice norm and applying H\"older's inequality, we obtain that $\norma{P(x)} \leq \norma{(x_j)_j}_p$. Taking the supremum over all $x \in B_E$ we get $\|P\| \leq \norma{(x_j)_j}_p$; and taking the infimum over all such sequences $(x_j)_j$ it follows that $\|P\| \leq \|P\|_{|{\cal K}_p|}$. This proves condition (I) and gives the implication $\|P\|_{|{\cal K}_p|} = 0 \Longrightarrow P = 0$.

We skip the easy proof that $\|\lambda P\|_{|{\cal K}_p|} = |\lambda|\!\cdot\! \|P\|_{|{\cal K}_p|}$ for all $P \in  {\cal P}_{|\mathcal{K}_p|}(^n E; F)$ and  $\lambda \in \R$.

Now we shall prove completeness and the triangle inequality using an argument inspired in the proof of \cite[Theorem 2.1]{kimfenicae}. To do so,
let $(P_i)_i$ be a sequence in ${\cal P}_{|\mathcal{K}_p|}(^n E; F)$ such that $\sum\limits_{i=1}^\infty \|P_i\|_{|{\cal K}_p|} < \infty$. We have already proved that $\|P_i\| \leq \|P_i\|_{|{\cal K}_p|}$ holds for every $i \in \N$, so the completeness of $\mathcal{P}( ^n E; F)$ gives a polynomial $P \in \mathcal{P}(^nE; F)$ such that $P = \sum\limits_{i=1}^\infty P_i$ with respect to the usual norm $\|\cdot\|$. Given $\varepsilon > 0$, for each $i \in \N$ there is a sequence $(y_j^i)_j \in \ell_p(F)$ such that $P_i(B_E) \subseteq \sol{p\mbox{-conv}\{ (y_j^i)_j \}}$ and $
\norma{(y_j^i)_j}_p \leq \|P_i\|_{|{\cal K}_p|} + \varepsilon/2^i$. For $i,j \in \N$, we define
 \begin{equation} z_{j}^i = \displaystyle \frac{y_j^i}{(\|P_i\|_{|{\cal K}_p|} + \varepsilon/2^i)^{1/p^{\ast}}} \in F. \label{yjmb}\end{equation}
From
\begin{align*}
    \sum_{i=1}^\infty \sum_{j=1}^\infty \norma{z_j^i}^p & = \sum_{i=1}^\infty \frac{\norma{(y_j^i)_j}_p^p}{(\|P_i\|_{|{\cal K}_p|} + \varepsilon/2^i)^{p/p^{\ast}}}   = \sum_{i=1}^\infty \frac{\norma{(y_j^i)_j}_p^p}{(\|P_i\|_{|{\cal K}_p|} + \varepsilon/2^i)^{p - 1}} \\
    & = \sum_{i=1}^\infty \left (\frac{\norma{(y_j^i)_j}_p}{\|P_i\|_{|{\cal K}_p|} + \varepsilon/2^i} \right )^{p} \left ( \|P_i\|_{|{\cal K}_p|} + \varepsilon/2^i \right ) \\
    & \leq  \sum_{i=1}^\infty (\|P_i\|_{|{\cal K}_p|} + \varepsilon/2^i) = \sum_{i=1}^\infty \|P_i\|_{|{\cal K}_p|} + \varepsilon < \infty,
\end{align*}
it follows that $(z_{j}^i)_{i,j=1}^\infty \in \ell_p(F)$ and $\|(z_{j}^i)_{i,j=1}^\infty\|_p^p \leq  \sum\limits_{i=1}^\infty \|P_i\|_{|{\cal K}_p|} + \varepsilon$.

Let $x \in B_E$ and $y^\ast \in F^\ast$ be given. Since $P_i(B_E) \subseteq \sol{p\mbox{-conv}\{ (y_j^i)_j \}}$ for every $i \in \N$, Lemma \ref{lema} gives $|y^\ast|(|P_i(x)|) \leq \norma{(|y^\ast|(|y_j^i|))_j}_p$, hence
\begin{align*}
    |y^\ast|(|P(x)|) & = |y^\ast|\left ( \left |\sum_{i=1}^\infty P_i(x) \right | \right ) \leq |y^\ast| \left ( \sum_{i=1}^\infty |P_i(x)| \right ) \\
    & = \sum_{i=1}^\infty |y^\ast| (|P_i(x)|) \leq \sum_{i=1}^\infty \norma{(|y^\ast|(|y_j^i|))_j}_p  = \sum_{i=1}^\infty \left ( \sum_{j=1}^\infty [|y^\ast|(|y_j^i|)]^p \right )^{1/p} \\
    & \stackrel{\small\rm(\ref{yjmb})}{=} \sum_{i=1}^\infty (\|P_i\|_{|{\cal K}_p|} + \varepsilon/2^i)^{1/p^{\ast}}\!\cdot\! \left ( \sum_{j=1}^\infty [|y^\ast|(|z_j^i|)]^p \right )^{1/p} \\
    & \leq \sum_{i=1}^\infty (\|P_i\|_{|{\cal K}_p|} + \varepsilon/2^i)^{1/p^{\ast}} \!    \cdot \!\left ( \sum_{j,k=1}^\infty [|y^\ast|(|z_j^k|)]^p\right )^{1/p} \\
    & \leq \left ( \sum_{i=1}^\infty \|P_i\|_{|{\cal K}_p|} + \varepsilon \right )^{1/p^\ast} \!\!\cdot\! \norma{(|y^\ast|(|z_{j}^i|))_{i,j}}_p  = \displaystyle \left \| (|y^\ast|(|w_{j}^i|)) \right \|_p,
\end{align*}
where
$$ w_{j}^i = \displaystyle \left ( \sum_{i=1}^\infty \|P_i\|_{|{\cal K}_p|} + \varepsilon \right )^{1/p^\ast} z_{j}^i $$
for all $i,j \in \N$. Applying Lemma \ref{lema} again  we conclude that $P(B_E) \subseteq \sol{p\mbox{-conv}\{ (w_j^i)_{i,j} \}}$. This proves that $P \in {\cal P}_{|\mathcal{K}_p|}(^n E; F)$ and
$$ \norma{P}_{|\mathcal{K}_p|} \leq \norma{(w_{j}^i)_{i,j}}_p = \left ( \sum_{i=1}^\infty \|P_i\|_{|{\cal K}_p|} + \varepsilon \right )^{1/p^\ast} \!\!\cdot\! \norma{(z_j^i)_{i,j}}_p \leq \left ( \sum_{i=1}^\infty \|P_i\|_{|{\cal K}_p|} + \varepsilon \right )^{1/p^\ast + 1/p}.  $$
Since $1/p + 1/p^\ast = 1$ and $\varepsilon > 0$ is arbitrary, it follows that $\norma{P}_{|\mathcal{K}_p|} \leq \sum\limits_{i=1}^\infty \|P_i\|_{|{\cal K}_p|}$. The triangle inequality is settled. Moreover, the same argument shows that, for every $k \in \N$, $\sum\limits_{i > k} P_i \in {\cal P}_{|\mathcal{K}_p|}(^n E; F)$ and $\left\|\sum\limits_{i > k} P_i\right\|_{|{\cal K}_p|} \leq \sum\limits_{i > k} \norma{P_i}_{|{\cal K}_p|}$. Therefore,
$$ \left\|\sum_{i=1}^k P_i - P\right\|_{|{\cal K}_p|} = \left\|\sum_{i > k} P_i\right\|_{|{\cal K}_p|} \leq \sum_{i > k} \norma{P_i}_{|{\cal K}_p|} \longrightarrow 0 \mbox{ as } k \longrightarrow\infty.$$
Completeness has been established.

As to condition (II), let $P, Q \colon E \to F$ be solid $p$-compact $n$-homogeneous polynomials with $Q \geq 0$ and $|P(x)| \leq Q(|x|)$ for every $x \in E$. 
Let $(y_j)_j \in \ell_p(F)$ be such that $Q(B_E) \subseteq \sol{p\mbox{-conv}\{(y_j)_j\}}$. 
For every $x \in B_E$ there exists a sequence $(a_j)_j \in B_{\ell_{p^\ast}}$ such that 
$$|P(x)| \leq Q(|x|) \leq \left|\sum_{j=1}^\infty a_j y_j\right|.  $$
This proves that $P(x) \in \sol{p\mbox{-conv}\{(y_j)_j\}}$ for every $x \in B_E$, in other words, $P(B_E) \subseteq \sol{p\mbox{-conv}\{(y_j)_j\}}$.  
Hence, $\|P\|_{{|\cal K}_p|} \leq \norma{(y_j)_j}_p$; and taking the infimum over all such sequence $(y_j)$ we get $\|P\|_{{|\cal K}_p|} \leq \|Q\|_{{|\cal K}_p|}$.

The second statement follows from the inequalites $\|\cdot\|\leq \|\cdot\|_{{|\cal K}_p|} \leq \|\cdot\|_{{\cal K}_p}$ (the first was proved above and the second is obvious).
\end{proof}


\begin{proposition} \label{pcompact2}
    Let $P, Q \colon E \to F$ be positive $n$-homogeneous polynomials such that $P \leq Q$. 
    If  $Q$ is solid $p$-compact, then $P$ is solid $p$-compact.
\end{proposition}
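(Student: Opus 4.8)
The plan is to show that $P(B_E)$ lies inside the very solid hull that already witnesses the solid $p$-compactness of $Q$, so that no new $p$-summable sequence needs to be produced. The one non-formal ingredient I would use is the inequality $|R(x)| \leq R(|x|)$, valid for \emph{every} $x \in E$ and \emph{every} positive $n$-homogeneous polynomial $R \colon E \to F$. This is obtained by writing $x = x^+ - x^-$ and expanding the associated positive symmetric $n$-linear operator $T_R$: the value $R(x) = T_R(x,\ldots,x)$ equals an alternating sum of the $2^n$ quantities $T_R(u_1,\ldots,u_n)$ with each $u_k \in \{x^+, x^-\}$, all of which are $\geq 0$ because $T_R$ is positive; hence $|R(x)|$ is dominated by the sum of these quantities, which is exactly $T_R(x^+ + x^-, \ldots, x^+ + x^-) = T_R(|x|,\ldots,|x|) = R(|x|)$.

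Next I would bring in the hypothesis $P \leq Q$. Since the correspondence $R \mapsto T_R$ is linear and order preserving, $P \leq Q$ means that $T_Q - T_P = T_{Q-P}$ is a positive symmetric $n$-linear operator, i.e. $T_P \leq T_Q$ on $(E^+)^n$. Applying the inequality of the previous paragraph to $R = P$ and evaluating at $|x| \in E^+$ then yields, for every $x \in E$,
\[ |P(x)| \leq P(|x|) = T_P(|x|,\ldots,|x|) \leq T_Q(|x|,\ldots,|x|) = Q(|x|). \]

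Finally, fix a sequence $(y_j)_j \in \ell_p(F)$ with $Q(B_E) \subseteq {\rm sol}(p\mbox{-conv}\{(y_j)_j\})$. For $x \in B_E$ one has $\norma{\,|x|\,} = \norma{x} \leq 1$ because the norm of $E$ is a lattice norm, so $Q(|x|) \in Q(B_E) \subseteq {\rm sol}(p\mbox{-conv}\{(y_j)_j\})$. Since $0 \leq |P(x)| \leq Q(|x|)$ and the solid hull of a set is a solid set, it follows that $|P(x)| \in {\rm sol}(p\mbox{-conv}\{(y_j)_j\})$ and hence $P(x) \in {\rm sol}(p\mbox{-conv}\{(y_j)_j\})$. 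Thus $P(B_E) \subseteq {\rm sol}(p\mbox{-conv}\{(y_j)_j\})$ with $(y_j)_j \in \ell_p(F)$, which is precisely the definition of $P$ being solid $p$-compact; one even reads off $\norma{P}_{|{\cal K}_p|} \leq \norma{Q}_{|{\cal K}_p|}$. I do not anticipate any genuine obstacle here: the entire argument rests on the pointwise domination $|P(x)| \leq Q(|x|)$, after which it collapses to exactly the solidity reasoning already used to verify condition (II) in Proposition \ref{pcompactprop}. This is also why the separate statement is convenient, since there the hypothesis was phrased as $|P(x)| \leq Q(|x|)$ rather than as the order relation $P \leq Q$.
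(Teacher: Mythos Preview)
Your proposal is correct and follows essentially the same route as the paper's proof: both arguments establish the pointwise domination $|P(x)| \leq P(|x|) \leq Q(|x|)$ and then use solidity of ${\rm sol}(p\mbox{-conv}\{(y_j)_j\})$ together with $|x| \in B_E$ to conclude $P(B_E) \subseteq {\rm sol}(p\mbox{-conv}\{(y_j)_j\})$. The only difference is that you spell out the multilinear expansion justifying $|P(x)| \leq P(|x|)$, which the paper takes for granted.
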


\begin{proof} Let $(y_j)_j\in \ell_p(F)$ be  such  that $Q(B_E) \subseteq \sol{p\mbox{-conv}\{(y_j)_j\}}$. In particular we have $\sol{Q(B_E)} \subseteq \sol{p\mbox{-conv}\{(y_j)_j\}}$.
Since
$|P(x)| \leq P(|x|) \leq Q(|x|)$ for every $x \in E$, we have $P(x) \in \sol{Q(B_E)} \subseteq \sol{p\mbox{-conv}\{(y_j)_j\}} $ for every $x \in B_E$. This proves that $P(B_E) \subseteq \sol{p\mbox{-conv}\{(y_j)_j\}}$, therefore $P$ is solid $p$-compact.
\end{proof}

Now we are in the position to apply Theorem \ref{teogeral} to the solid $p$-compact case.
\begin{example}\rm \label{expcompact}
     Let $1 < p < \infty$ and let $E, \, F$ be two Banach lattices with $F$ Dedekind complete.
    By taking $\A = \mathcal{P}_{|\mathcal{K}_p|}(^n E; F)$ and $\norma{\cdot}_{\A} = \|\cdot\|_{|{\cal K}_p|}$ in Theorem \ref{teogeral} we obtain, from Proposition \ref{pcompactprop}, that
    $\mathcal{P}_{|\mathcal{K}_p|}^r(^n E; F)$ is a Banach space with the norm
$$ \|P\|_{|{\cal K}_p|^r} = \inf\left\{\|Q\|_{|{\cal K}_p|}: Q \in \mathcal{P}_{|\mathcal{K}_p|}^+(^n E; F), \, Q \geq |P|\right\}. $$
Proposition \ref{pcompact2} provides the domination property for solid $p$-compact polynomials, so $(\mathcal{P}_{|\mathcal{K}_p|}^r(^n E; F), \|\cdot\|_{||{\cal K}_p|^r})$ is a Banach lattice by Theorem \ref{teogeral}. For $p=1$, the example holds whenever $E$ contains no copy of $c_0$.
\end{example}

\section{Non-embeddability of $c_0$ in $\mathcal{P}^r(^n E; F)$}

Our first task in this section is to give sufficient conditions on a pair of Banach lattices $(E, F)$ in order to obtain that $\A^r$ is a projection band in $\mathcal{P}^r(^n E; F)$ for some closed subspace $\A$ of $\mathcal{P}(^n E; F)$. To establish this result, we will need the following lemma whose proof is contained in the proof of \cite[Theorem 2.9 (iv)$\Rightarrow$(v)]{xanthos}.

\begin{lemma} \label{lemaxanthos}
    If $F$ is an atomic Banach lattice with order continuous norm, then there exists an increasing net of finite rank positive operators $(T_\lambda)_\lambda$ in $ \mathcal{L}(F; F)$ such that $\lim\limits_\lambda T_\lambda (x) = \sup\limits_\lambda T_\lambda (x) = x$ for every $x \in F^+$.
\end{lemma}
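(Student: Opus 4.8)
The statement to prove is Lemma \ref{lemaxanthos}: for an atomic Banach lattice $F$ with order continuous norm, there exists an increasing net of finite rank positive operators $(T_\lambda)_\lambda$ with $\lim_\lambda T_\lambda(x) = \sup_\lambda T_\lambda(x) = x$ for every $x \in F^+$.

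The plan is to use the atomic structure directly. Let $(e_i)_{i \in I}$ be a maximal family of pairwise disjoint positive atoms in $F$; since $F$ is atomic this family generates $F$ as a band, and because the norm is order continuous, the band generated is all of $F$. For each atom $e_i$, the principal band $B_i$ generated by $e_i$ is one-dimensional and there is a positive band projection $\pi_i \colon F \to B_i$, given by $\pi_i(x) = \phi_i(x) e_i$ for a suitable positive functional $\phi_i$ (normalized so $\pi_i(e_i) = e_i$). These projections are mutually disjoint in the sense that $\pi_i \pi_j = 0$ for $i \neq j$.

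Now let $\Lambda$ be the directed set of finite subsets of $I$ ordered by inclusion, and for $\lambda = \{i_1, \dots, i_k\} \in \Lambda$ set $T_\lambda = \sum_{i \in \lambda} \pi_i$. Each $T_\lambda$ is a positive finite rank operator (rank $\le |\lambda|$), and the net is increasing because adding atoms adds positive disjoint summands: $T_\lambda \le T_\mu$ whenever $\lambda \subseteq \mu$, since $T_\mu - T_\lambda = \sum_{i \in \mu \setminus \lambda} \pi_i \ge 0$. The key remaining point is that $\sup_\lambda T_\lambda(x) = x$ for $x \in F^+$. For fixed $x \ge 0$, the net $(T_\lambda(x))_\lambda$ is increasing and bounded above by $x$ (as $T_\lambda \le I$ in the order of regular operators — indeed each partial sum of disjoint band projections is dominated by the identity), so its supremum $y := \sup_\lambda T_\lambda(x)$ exists and $y \le x$. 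One shows $x - y$ is disjoint from every atom $e_i$: indeed $\pi_i(x - y) \le \pi_i(x) - \pi_i(T_{\{i\}}(x)) = \pi_i(x) - \pi_i(x) = 0$ using $\pi_i T_\lambda = \pi_i$ once $i \in \lambda$. Since the atoms' band is all of $F$ (by atomicity plus order continuity), an element disjoint from all atoms is zero, hence $y = x$. Finally, by order continuity of the norm, order convergence of $(T_\lambda(x))_\lambda$ upward to $x$ implies norm convergence, giving $\lim_\lambda T_\lambda(x) = x$; for general $x = x^+ - x^-$ we combine the two pieces, though for the statement only $x \in F^+$ is needed.

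The main obstacle — really the only subtle point — is justifying that the band generated by the atoms is all of $F$ and that $x - y$ disjoint from every atom forces $x - y = 0$; this is where atomicity (every nonzero positive element dominates an atom) combined with order continuity of the norm (so that the band of atoms is a projection band, equivalently all of $F$) is used. Everything else is bookkeeping with disjoint band projections. I would also remark that this is essentially extracted verbatim from the proof of \cite[Theorem 2.9, (iv)$\Rightarrow$(v)]{xanthos}, so a short self-contained argument along these lines suffices and one may simply cite that source for the details.
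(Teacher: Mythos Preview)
Your proposal is correct and matches the paper's approach: the paper does not give an independent proof but simply states that the argument is contained in the proof of \cite[Theorem 2.9 (iv)$\Rightarrow$(v)]{xanthos}, which is precisely the construction you have written out (finite sums of the rank-one band projections onto the atoms, indexed by finite subsets of the atom set). Your self-contained write-up is accurate, including the verification that $\sup_\lambda T_\lambda(x)=x$ via disjointness from all atoms and the use of order continuity to pass from order convergence to norm convergence.
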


A vector-valued map has finite rank if the subspace generated by its range is finite-dimensional.

\begin{theorem} \label{teoremakb}
    Let $E$ and $F$ be two Banach lattices with $F$ Dedekind complete and let $\A$ be a closed subspace of $\mathcal{P}(^n E; F)$. Suppose that $(E, F)$ satisfies the $\mathcal{A}$-domination property and 
    consider the following conditions: \\
    {\rm (1)} $(\mathcal{P}^r(^n E; F), \norma{\cdot}_r)$ contains no copy of $c_0$. \\
    {\rm (2)} $(\A^r, \norma{\cdot}_{{\cal A},r})$ contains no copy of $c_0$. \\
    {\rm (3)} $\A^r$ is a projection band in $\mathcal{P}^r(^n E; F)$. \\
    {\rm (4)} Every positive $n$-homogeneous polynomial $P \colon E \to F$ belongs to $\mathcal{A}$. \\
    Then {\rm (1)}$\Rightarrow${\rm(2)}$\Rightarrow${\rm(3)}. In addition, if $F$ is atomic with order continuous norm and every finite rank positive $n$-homogeneous polynomial from $E$ to $F$ belongs to $\A$, then {\rm(3)}$\Rightarrow${\rm(4)}.
\end{theorem}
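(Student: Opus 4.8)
The plan is to establish the chain of implications (1)$\Rightarrow$(2)$\Rightarrow$(3) first, since the passage from the big space to the subspace, and from a sublattice with no copy of $c_0$ to a projection band, should follow from general Banach lattice principles, and then to treat the delicate implication (3)$\Rightarrow$(4) using the net from Lemma~\ref{lemaxanthos}.

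\textbf{The implication (1)$\Rightarrow$(2).} By Corollary~\ref{qnh2}, $(\A^r,\norma{\cdot}_{\A,r})$ is a Banach lattice which, by Theorem~\ref{teogeral}, is an ideal in $\mathcal{P}^r(^nE;F)$, and moreover $\norma{P}_{\A,r}=\norma{P}_r$ for every $P\in\A^r$ (this is the content of the conclusion of Corollary~\ref{qnh2} when $\norma{\cdot}_\A=\norma{\cdot}$, which applies here since $\A$ is closed). So $\A^r$ is a closed ideal of $\mathcal{P}^r(^nE;F)$ carrying exactly the restriction of $\norma{\cdot}_r$. A closed sublattice of a Banach lattice containing no copy of $c_0$ itself contains no copy of $c_0$; hence (2) holds. (Equivalently: a KB-space has the property that its closed ideals are KB-spaces.)

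\textbf{The implication (2)$\Rightarrow$(3).} Here I invoke the classical fact that a Banach lattice contains no copy of $c_0$ if and only if it is a KB-space (stated in the excerpt), together with the structural theorem that an ideal which is a KB-space inside a Dedekind complete Banach lattice is a projection band. More precisely: $\A^r$ is an ideal in $\mathcal{P}^r(^nE;F)$ (Theorem~\ref{teogeral}), $\mathcal{P}^r(^nE;F)$ is Dedekind complete since $F$ is, and a KB-space is Dedekind $\sigma$-complete and has order continuous norm. For an ideal $I$ of a Dedekind complete lattice $L$, if $I$ has order continuous norm (hence is a band: order-dense-in-itself closedness under suprema is automatic from order continuity of the norm restricted to $I$, and for $0\le x_\alpha\uparrow\le$ something in $I$ the supremum stays in $I$), then $I$ is a band; and every band in a Dedekind complete lattice is a projection band. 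I would cite \cite{meyer} or \cite{alip} for ``an ideal with order continuous norm in a Dedekind complete Banach lattice is a projection band'' and thereby obtain (3).

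\textbf{The implication (3)$\Rightarrow$(4), the main obstacle.} Assume $F$ is atomic with order continuous norm, every finite rank positive $n$-homogeneous polynomial lies in $\A$, and $\A^r$ is a projection band; let $\Pi$ be the band projection onto $\A^r$. Fix a positive $n$-homogeneous polynomial $P\colon E\to F$; I want $P\in\A$. Let $(T_\lambda)_\lambda$ be the increasing net of finite rank positive operators on $F$ from Lemma~\ref{lemaxanthos}, with $T_\lambda x\uparrow x$ for $x\in F^+$. Then each $T_\lambda\circ P\colon E\to F$ is a finite rank positive $n$-homogeneous polynomial (finite rank because the range lands in the finite-dimensional range of $T_\lambda$; it is a polynomial since composition of a polynomial with a linear map is a polynomial; positive since $T_\lambda$ and $P$ are), so $T_\lambda\circ P\in\A\subseteq\A^r$. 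The net $(T_\lambda\circ P)_\lambda$ is increasing in $\mathcal{P}^r(^nE;F)$ and order-bounded above by $P$: indeed for $x\in E$, $(T_\lambda\circ P)(x)=T_\lambda(P(x))\le P(x)=P(|x|)$ evaluated appropriately, and one checks the sup in $\mathcal{P}^r(^nE;F)$ of the net is exactly $P$ by evaluating pointwise and using that evaluation at a fixed point is an order continuous lattice operation together with $T_\lambda(P(x))\uparrow P(x)$ — here I must be careful to argue that the pointwise supremum computed on positive elements actually gives the supremum of the net of polynomials in the Dedekind complete lattice $\mathcal{P}^r(^nE;F)$, which is where the bulk of the technical work lies and likely mirrors an argument in \cite{xanthos}. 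Granting $\sup_\lambda (T_\lambda\circ P)=P$ in $\mathcal{P}^r(^nE;F)$: since $\A^r$ is a band, it is closed under suprema of its order-bounded subsets, so $P=\sup_\lambda(T_\lambda\circ P)\in\A^r$. Finally $\A$ itself is closed in $\mathcal{P}(^nE;F)$ and $P\ge0$ lies in $\A^r=\mathrm{span}\,\A^+$; writing $P=P_1-P_2$ with $P_i\in\A^+\subseteq\A$ gives $P\in\A$, which is (4).

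\textbf{Summary of difficulties.} Steps (1)$\Rightarrow$(2) and (2)$\Rightarrow$(3) are soft once the KB-space/ideal/band dictionary is in hand; the real work is (3)$\Rightarrow$(4), and within it the crucial point is verifying that $\sup_\lambda(T_\lambda\circ P)=P$ holds \emph{in the lattice $\mathcal{P}^r(^nE;F)$} (not merely pointwise), which requires relating the lattice operations on regular polynomials to pointwise behavior on $E^+$ via the linearization isometry (\ref{omku}) and the description of the positive projective symmetric tensor product, plus the order continuity supplied by the hypothesis on $F$. I expect this to be the one genuinely technical lemma needed, and I would isolate it or borrow it from the structure already developed around Example~\ref{corcompacto}.
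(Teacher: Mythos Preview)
Your overall strategy matches the paper's, but there is a genuine gap in your argument for (2)$\Rightarrow$(3). You correctly start by noting that $\A^r$ is a KB-space, but you then weaken this to ``$\A^r$ has order continuous norm'' and assert that an ideal with order continuous norm in a Dedekind complete Banach lattice is automatically a band. That assertion is false: $c_0$ is an ideal with order continuous norm in the Dedekind complete lattice $\ell_\infty$, yet $c_0$ is not a band there (the partial sums $e_1+\cdots+e_k$ increase to $\mathbf{1}\notin c_0$). No reference in \cite{meyer} or \cite{alip} will supply that statement. The paper uses the full KB-property directly: given $0\le P_\alpha\uparrow P$ in $\mathcal{P}^r(^nE;F)$ with $P_\alpha\in\A^r$, the net is norm bounded (by $\norma{P}_r$, since $\norma{\cdot}_{\A,r}=\norma{\cdot}_r$ on $\A^r$) and increasing in the KB-space $\A^r$, hence norm convergent in $\A^r$ to some $Q$; then $Q=\sup_\alpha P_\alpha=P$, so $P\in\A^r$. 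This is precisely the step where order continuity alone is insufficient and the KB-property is essential.

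For (3)$\Rightarrow$(4) your plan is the paper's plan. The point you flag as the real technical hurdle --- that $\sup_\lambda(T_\lambda\circ P)=P$ holds in the lattice $\mathcal{P}^r(^nE;F)$ and not merely pointwise --- is handled cleanly in the paper by passing to linearizations via (\ref{omku}): one checks $\sup_\lambda P_\lambda^\otimes(z)=\sup_\lambda T_\lambda(P^\otimes(z))=P^\otimes(z)$ for every $z\in(\widehat{\otimes}_{s,|\pi|}^nE)^+$, and since suprema in $\mathcal{L}^r$ are computed pointwise on the positive cone, $\sup_\lambda P_\lambda^\otimes=P^\otimes$, hence $\sup_\lambda P_\lambda=P$. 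This bypasses your worry about ``evaluation at a point being order continuous'' and needs nothing beyond the standard order structure of $\mathcal{L}^r$. Your final sentence (decomposing $P=P_1-P_2$ to land in $\A$) is superfluous: $\A^r\subseteq\A$ automatically because $\A$ is a linear subspace.
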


\begin{proof}
    We begin noting that it follows from Theorem \ref{teogeral} that $(\mathcal{A}^r, \norma{\cdot}_{\A,r})$ is a Banach lattice with $\norma{P}_{\A,r} = \norma{P}_{r}$ for every $P \in \mathcal{A}^r$. Moreover, $\A^r$ is an ideal in $\mathcal{P}^r(^n E; F)$.

    (1)$\Rightarrow$(2) This follows immediately from the fact that $\mathcal{P}^r(^n E; F)$ contains $\A^r$ as a subspace.

    (2)$\Rightarrow$(3) Assuming that $\A^r$ contains no copy of $c_0$, we get that $\A^r$ is a $KB$-space. To prove that $\A^r$ is a band in $\mathcal{P}^r(^n E; F)$, take a net $(P_\alpha)_\alpha \subset \A^r$ with $0 \leq P_\alpha \uparrow P$ in $\mathcal{P}^r(^n E; F)$. Since $\norma{P_\alpha}_{\A,r} = \norma{P_\alpha}_r \leq \norma{P}_r$ holds for every $\alpha$, we obtain that $(P_\alpha)$ is a norm bounded monotone net in the $KB$-space $\A^r$, hence there exists $Q = \lim_\alpha P_\alpha \in \A^r$. In particular, $\norma{P_\alpha - Q}_r = \norma{P_\alpha - Q}_{\A,r} \to 0$, which implies by \cite[Lemma 2]{mcarthur} that $Q = \sup\limits_\alpha P_\alpha = P$. As $\A^r$ is an ideal in $\mathcal{P}^r(^n E; F)$ (Theorem \ref{teogeral}), this proves that $\A^r$ is a band in $\mathcal{P}^r(^n E; F)$ (see the comment after \cite[Lemma 1.37]{alip}). Since $\mathcal{P}^r(^n E; F)$ Dedekind complete, we obtain from the comment at the beginning of \cite[p.\,36]{alip} that $\A^r$ is a projection band.

      Now we assume that $F$ is atomic with order continuous norm and that every finite rank positive $n$-homogeneous polynomial from $E$ to $F$ belongs to $\A$. To prove that (3)$\Rightarrow$(4), let $P \colon E \to F$ be a positive $n$-homogeneous polynomial. Since $F$ is atomic with order continuous norm, there exists by Lemma \ref{lemaxanthos} an increasing net of finite rank positive operators $(T_\lambda)_\lambda \subset \mathcal{L}^+(F; F)$ with $\lim\limits_\lambda T_\lambda (x) = \sup\limits_\lambda T_\lambda (x) = x$ for every $x \in F^+$. So, $P_\lambda := T_\lambda \circ P \colon E \to F$ defines a net of $n$-homogeneous polynomials such that: \\
    {\rm (i)} For each $\lambda$, $P_\lambda^\otimes = T_\lambda \circ P^\otimes$ is a positive finite rank operator, which yields that $P_\lambda \in \A$. \\
    {\rm (ii)} As $(T_\lambda)_\lambda$ is an increasing net and $P \geq 0$, we obtain that $(P_\lambda)_\lambda$ is an increasing net. Indeed,
    $ P_\mu^\otimes(z) = T_\mu(P^\otimes(z)) \leq T_\lambda(P^\otimes(z)) = P_\lambda^\otimes(z) $
    holds for all $z\in \left (\widehat{\otimes}_{s, |\pi|}^n E \right )^+$ and $\mu \leq \lambda$.\\
    {\rm (iii)} For each $z \in \left (\widehat{\otimes}_{s, |\pi|}^n E \right )^+$,  $ \sup\limits_\lambda P_\lambda^\otimes(z) = \sup\limits_\lambda T_\lambda(P^\otimes(z)) = P^\otimes(z), $ which implies that $\sup\limits_\lambda P_\lambda^\otimes = P^\otimes$, so $\sup\limits_\lambda P_\lambda = P$.

    From the items above, we obtain that $(P_\lambda)_\lambda$ is an increasing net contained in $\A$ such that $\sup\limits_\lambda P_\lambda = P$. Since $\A^r$ is a a projection band in $\mathcal{P}^r(^n E; F)$ by assumption, we conclude that $P \in \A^r$.
\end{proof}

From now on, we shall  establish when the conditions in Theorem \ref{teoremakb} are equivalent whenever $\mathcal{A} = \mathcal{P}_{\mathcal{K}}(^n E; F)$ or $\A = \mathcal{P}_{\mathcal{W}}(^n E; F)$. Recall that the compact linear case was studied by Kalton, the compact polynomial case was considered by P\'erez and the lattice linear case was settled by Xanthos. Our application of Theorem \ref{teoremakb} in the compact case will provide the lattice polynomial case. To do so, we need to recall that a Banach lattice $E$ has the dual positive Schur property if every positive weak* null sequence in $E^\ast$ is norm null (see, e.g., \cite[p.\,760]{wnukdual}).

\begin{theorem} \label{teoprincipal}
    If $E$ is a Banach lattice that fails the dual positive Schur property and $F$ is an infinite dimensional atomic Banach lattice with order continuous norm, then the following are equivalent for every $n \in \N$:\\
    {\rm (1)} $(\mathcal{P}^r(^n E; F), \norma{\cdot}_r)$ contains no copy of $c_0$.
    \\
    {\rm (2)} $(\mathcal{P}_{\mathcal{K}}^r(^n E; F), \norma{\cdot}_{{\cal K},r})$ contains no copy of $c_0$.
    \\
    {\rm (3)} $\mathcal{P}_{\mathcal{K}}^r(^n E; F)$ is a projection band in $\mathcal{P}^r(^n E; F)$. \\
    {\rm (4)} Every positive $n$-homogeneous polynomial from $E$ to $F$  is compact. 
\end{theorem}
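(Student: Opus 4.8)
The plan is to apply Theorem \ref{teoremakb} with $\A = \mathcal{P}_{\mathcal{K}}(^n E; F)$ and supply the two missing ingredients: the hypotheses of Theorem \ref{teoremakb} and a proof of the implication $(4)\Rightarrow(1)$ that closes the cycle. First I would check that $\A = \mathcal{P}_{\mathcal{K}}(^n E; F)$ is a closed subspace of $\mathcal{P}(^n E; F)$ and that the pair $(E,F)$ satisfies the $\A$-domination property; the latter is precisely the content of Example \ref{corcompacto}, since $F$ is atomic with order continuous norm (and in particular Dedekind complete). Moreover, every finite rank polynomial is compact, so the extra hypothesis needed for $(3)\Rightarrow(4)$ in Theorem \ref{teoremakb} holds automatically. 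Thus Theorem \ref{teoremakb} already gives $(1)\Rightarrow(2)\Rightarrow(3)\Rightarrow(4)$, with $\A^r = \mathcal{P}_{\mathcal{K}}^r(^n E; F)$ and $\norma{\cdot}_{\A,r} = \norma{\cdot}_{{\cal K},r} = \norma{\cdot}_r$ on $\mathcal{P}_{\mathcal{K}}^r(^n E; F)$ by Corollary \ref{qnh2}.

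The real work is $(4)\Rightarrow(1)$: assuming every positive $n$-homogeneous polynomial from $E$ to $F$ is compact, I must show $\mathcal{P}^r(^n E; F)$ contains no copy of $c_0$, equivalently that it is a $KB$-space. Under (4), every regular polynomial is a difference of compact ones, so $\mathcal{P}^r(^n E; F) = \mathcal{P}_{\mathcal{K}}^r(^n E; F)$ as vector lattices, and by Corollary \ref{qnh2} the regular norm is a complete lattice norm on this space. It then suffices to show this Banach lattice is a $KB$-space, i.e. every norm-bounded increasing positive sequence converges. Here is where the hypothesis that $E$ \emph{fails} the dual positive Schur property enters, in contrapositive form: I would argue that if $\mathcal{P}^r(^n E; F)$ were \emph{not} a $KB$-space, then it would contain a lattice copy of $c_0$, and — using the linearization isomorphism $\mathcal{P}^r(^n E;F) \cong \mathcal{L}^r(\widehat{\otimes}^n_{s,|\pi|}E;F)$ from (\ref{omku}) together with the structure of $F$ (atomic, order continuous, infinite dimensional, hence containing a lattice copy of $c_0$ or $\ell_1$ or such) — produce a positive non-compact polynomial, contradicting (4). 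Alternatively, and probably more cleanly, I would prove directly that (4) forces $\mathcal{L}^r(\widehat{\otimes}^n_{s,|\pi|}E;F)$ to be a $KB$-space by invoking (the polynomial/tensor translation of) Xanthos's theorem \cite[Theorem 2.9]{xanthos}, whose hypothesis "$E$ atomic with order continuous norm" must be replaced by working on the tensor product; since the equivalence "$\mathcal{L}^r$ is $KB$ $\iff$ every positive operator is compact" in \cite{xanthos} is exactly the $n=1$ version of what we want, the task reduces to transferring it along the linearization. The role of "$E$ fails the dual positive Schur property" and "$F$ infinite dimensional" is to \emph{rule out the degenerate case} where every positive polynomial is already compact for trivial reasons and to guarantee the relevant copy of $c_0$ is genuinely there when (1) fails; concretely I expect it ensures that when $\mathcal{P}^r(^n E;F)$ is not $KB$ one can exhibit an explicit non-compact positive polynomial built from a positive weak* null but not norm null sequence in $E^*$ tensored against an appropriate positive element arising from the atomic structure of $F$.

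I expect the main obstacle to be this last construction: carefully manufacturing, from the failure of the dual positive Schur property of $E$ and the atomic order continuous structure of an infinite dimensional $F$, a positive $n$-homogeneous polynomial $P\colon E\to F$ that is not compact, in the scenario where $\mathcal{P}^r(^n E;F)$ contains $c_0$. The natural candidate is something like $P(x) = \big(\sum_k x_k^*(x)^n\big)$-style diagonal map into the atoms of $F$, where $(x_k^*)$ is positive, weak* null, not norm null in $E^*$; one then checks positivity via the symmetric multilinear form, checks $P$ is regular and bounded, and checks non-compactness by testing on a suitable bounded sequence in $E$. Making the homogeneity degree $n$ work (the $n$-th powers $x_k^*{}^{\otimes n}$ acting through the positive projective symmetric tensor norm) and verifying boundedness of $P$ will require the estimates for $\norma{\cdot}_{s,|\pi|}$, but these are routine once the right sequence is in hand. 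The remaining implications, as noted, are immediate from Theorem \ref{teoremakb} and Example \ref{corcompacto}.
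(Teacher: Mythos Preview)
Your handling of $(1)\Rightarrow(2)\Rightarrow(3)\Rightarrow(4)$ via Theorem \ref{teoremakb} and Example \ref{corcompacto} is correct and matches the paper exactly.

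For $(4)\Rightarrow(1)$ your plan has a genuine gap: the paper's argument has two independent pieces, and you have only sketched (a misframed version of) the second. The first piece, which you omit entirely, is to show that $\big(\widehat{\otimes}_{s,|\pi|}^n E\big)^*$ has order continuous norm. The paper argues by contradiction: if not, $\widehat{\otimes}_{s,|\pi|}^n E$ contains a positively complemented copy of $\ell_1$, and composing the positive projection onto it with a positive map $\ell_1\to F$ built from any bounded positive sequence in the infinite-dimensional $F$ with no convergent subsequence produces a positive non-compact operator, hence (via \cite[Theorem~4.1]{libu}) a positive non-compact polynomial, contradicting (4). Together with \cite[Theorem~2.8]{chenwick} and (4), this yields that $\mathcal{L}^r(\widehat{\otimes}_{s,|\pi|}^n E; F)\cong \mathcal{P}^r(^nE;F)$ has order continuous norm. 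This step cannot be bypassed: the final appeal is to \cite[Theorem~4]{chenfeng}, which needs \emph{both} order continuity of $\mathcal{L}^r$ \emph{and} that $F$ be a $KB$-space. Your suggestion to invoke Xanthos directly fails here, since his hypothesis is that the \emph{domain} be atomic with order continuous norm, and nothing in sight guarantees this for $\widehat{\otimes}_{s,|\pi|}^n E$.

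The second piece is where your polynomial construction enters, but its role is to prove that \emph{$F$ is a $KB$-space}, not to contradict ``$\mathcal{P}^r$ contains $c_0$'' directly. If $F$ were not $KB$, take $(y_k)\subset F^+$ equivalent to the canonical basis of $c_0$ and $(x_k^*)\subset S_{E^*}^+$ weak$^*$-null (this is where failure of the dual positive Schur property is used), and set $P(x)=\sum_k (x_k^*(x))^n y_k$. This is essentially your candidate, but note that the $(y_k)$ must form a $c_0$-basic sequence in $F$---not arbitrary atoms---for the series even to converge; if $F$ were already $KB$ (say $F=\ell_1$) the construction breaks down, which is why the contradiction hypothesis must be placed on $F$, not on $\mathcal{P}^r$. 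Non-compactness of $P$ is then obtained by checking that $(P^\otimes)^*(y_k^*)=(x_k^*)^n$ is a normalized, pointwise-null sequence in $\big(\widehat{\otimes}_{s,|\pi|}^n E\big)^*$, so $(P^\otimes)^*$ cannot be compact.
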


\begin{proof} Since $F$ is atomic with order continuous norm, in particular $F$ is Dedekind complete, we have from Example \ref{corcompacto} that $(\mathcal{P}_{\mathcal{K}}^r(^n E; F), \norma{\cdot}_{{\cal K},r})$ is a Banach lattice such that $\norma{P}_{{\cal K},r} = \norma{P}_r$ for every $P \in \mathcal{P}_{\mathcal{K}}^r(^n E; F)$. As finite rank polynomials are compact, the implications (1)$\Rightarrow$(2)$\Rightarrow$(3)$\Rightarrow$(4) follow from Theorem \ref{teoremakb}.

    (4)$\Rightarrow$(1) We prove first that  $\left (\widehat{\otimes}_{s, |\pi|}^n E \right )^\ast$ has order continuous norm norm.  If $\left (\widehat{\otimes}_{s, |\pi|}^n E \right )^\ast$ fails to have order continuous norm, then $\widehat{\otimes}_{s, |\pi|}^n E $ contains a sublattice isomorphic to $\ell_1$ (see \cite[Theorem 2.4.14]{meyer})
    which is the range of a positive projection (see \cite[Proposition 2.3.11]{meyer}), i.e.
    there exist a sequence $(z_k)_k \subset \widehat{\otimes}_{s, |\pi|}^n E $ which is equivalent to the canonical basis $(e_k)_k$ of $\ell_1$ and a positive operator $\pi \colon \widehat{\otimes}_{s, |\pi|}^n E  \to \ell_1$ such that $\pi(z_k) = e_k$ for every $k \in \N$.
     On the other hand, since $F$ is infinite dimensional, there exists  a positive bounded sequence $(y_k)_k$ in $F$ with no convergent subsequence. Defining $S \colon \ell_1 \to F$ by $S((a_j)_j) = \sum_{i=1}^\infty a_i y_i$, we have that $S \geq 0$, consequently $T = S \circ \pi \colon \widehat{\otimes}_{s, |\pi|}^n E \to F$ is also a positive operator which is not compact, because $T(z_k) = S(e_k) = y_k$ for every $k \in \N$ and $(y_k)_k$ has no convergent
     subsequence in $F$. Since $F$ is atomic with order continuous norm, we get from \cite[Theorem 4.1]{libu} that the positive $n$-homogeneous polynomial $P \colon E \to F$ whose linearization is $T$ is not compact, which yields a contradiction. Thus $\left (\widehat{\otimes}_{s, |\pi|}^n E \right )^\ast$ has order continuous norm. Moreover, as
     $F$ also has order continuous norm, we obtain from \cite[Theorem 2.8]{chenwick} that $(\mathcal{K}^r(\widehat{\otimes}_{s, |\pi|}^n E; F), \norma{\cdot}_k)$ has order continuous norm. Now, considering the identification obtained in Example \ref{corcompacto}, the identification from \cite[Proposition 3.4]{bubuskes} and the assumption, we have that
     $$ \mathcal{K}^r(\widehat{\otimes}_{s, |\pi|}^n E; F) \cong \mathcal{P}_{\mathcal{K}}^r(^n E; F) =\mathcal{P}^r(^n E; F) \cong \mathcal{L}^r(\widehat{\otimes}_{s, |\pi|}^n E; F),$$
     consequently $(\mathcal{L}^r(\widehat{\otimes}_{s, |\pi|}^n E; F), \norma{\cdot}_r)$ has order continuous norm.

     Assume, for sake of contradiction, that $F$ is not a $KB$-space. So, there exists $(y_k)_k \subset F^+$ equivalent to the canonical basis of $c_0$. On the other hand, since $E$ fails the dual positive Schur property, there exists a weak* null sequence $(z_k^\ast)_k \subset (E^\ast)^+$ that is not norm null. By passing to a subsequence if necessary, we may assume that $\norma{z_k^\ast} \geq \varepsilon$ for every $k \in \N$. Letting $x_k^\ast = \frac{z_k^\ast}{\norma{z_k^\ast}}$, $k \in \N$, we obtain that $(x_k^\ast)_k \subset S_{E^*}^+$ is a weak* null sequence.
    As $((x_k^\ast(x))^n)_k \in c_0$ for every $x \in E$ and $(y_k)_k$ is equivalent to the canonical basis of $c_0$, the series $\sum\limits_{k=1}^\infty (x_k^\ast(x))^n y_k$ converges in $F$ for every $x \in E$, so we can define $P \colon E \to F$ by $P(x) = \sum\limits_{k=1}^\infty (x_k^\ast(x))^n y_k$ which is a positive $n$-homogeneous polynomial. Next we prove that $(P^\otimes)^\ast$ is not a compact operator, which will give a contradiction. Indeed, if $(y_k^\ast)_k$ is the sequence of biorthogonal functionals associated to $(y_k)_k$, then $(y_k^\ast)_k$ is a bounded sequence such that
    $$ (P^\otimes)^\ast(y_k^\ast) (\otimes^n x) = y_k^\ast(P(x)) = \sum_{i=1}^\infty (x_i^\ast(x))^n y_k^\ast (y_i) = (x_k^\ast (x))^n = (x_k^\ast)^n (\otimes^n x)$$
    for every $x \in E$, which yields that $(P^\otimes)^\ast(y_k^\ast) = (x_k^\ast)^n$ in $\left (\widehat{\otimes}_{s, |\pi|}^n E \right )^\ast \cong \mathcal{P}^r(^n E)$. Since $x_k^\ast \in S_{E^\ast}^+$ for every $k \in \N$, we have that $(P^\otimes)^\ast(y_k^\ast)$ is a positive polynomial, consequently
    $$ \norma{(P^\otimes)^\ast(y_k^\ast)}_r = \norma{(P^\otimes)^\ast(y_k^\ast)} = \norma{(x_k^\ast)^n} = \sup_{x \in B_E} |x_k^\ast(x)|^n = \left ( \sup_{x \in B_E} |x_k^\ast(x)| \right )^n = 1. $$
    On the other hand, since $x_k^\ast \cvfe 0$ we obtain that $(P^\otimes)^\ast(y_k^\ast)$ is a normalized sequence which is pointwise convergent to zero, hence it cannot have a convergent subsequence. So, $(P^\otimes)^\ast$ cannot be a compact operator. Thus $P^\otimes$ is not a compact operator either, and by \cite[Theorem 4.1]{libu} we obtain that $P$ is not a compact polynomial, which contradicts the hypothesis. Therefore $F$ is a $KB$-space, which implies by \cite[Theorem 4]{chenfeng} that $(\mathcal{L}^r(\widehat{\otimes}_{s, |\pi|}^n E; F), \norma{\cdot}_r)$ is a $KB$-space, and we are done.
\end{proof}

The following is an immediate consequence of Theorem \ref{teoprincipal}.

\begin{corollary} \label{cor1}
    If $F$ is an infinite dimensional atomic Banach lattice with order continuous norm and every positive $n$-homogeneous polynomial $P \colon E \to F$  is compact, then $\mathcal{P}^r(^nE; F)$ has order continuous norm. In addition, if $E$ fails the dual positive Schur property, then $\mathcal{P}^r(^nE; F)$ is a $KB$-space.
\end{corollary}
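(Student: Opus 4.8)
The plan is to deduce both assertions directly from Theorem \ref{teoprincipal}, using the hypotheses to place ourselves in the situation covered by that result. First I would observe that since $F$ is Dedekind complete (being atomic with order continuous norm), the space $\mathcal{P}^r(^nE;F)$ is indeed a Banach lattice with the regular norm, so the statements about order continuity and the $KB$-property make sense.

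For the first assertion, I would argue as follows. The hypothesis that every positive $n$-homogeneous polynomial $P\colon E\to F$ is compact is exactly condition (4) in Theorem \ref{teoprincipal}, \emph{provided} $E$ fails the dual positive Schur property. To handle the general case I would instead extract from the proof of the implication (4)$\Rightarrow$(1) in Theorem \ref{teoprincipal} the intermediate conclusion that $\left(\widehat{\otimes}_{s,|\pi|}^n E\right)^\ast$ has order continuous norm: that argument only used that $F$ is infinite dimensional, atomic with order continuous norm, together with the compactness of all positive polynomials, and did \emph{not} use the dual positive Schur property. Combining this with the order continuity of the norm of $F$ and \cite[Theorem 2.8]{chenwick}, one gets that $\mathcal{K}^r(\widehat{\otimes}_{s,|\pi|}^n E;F)$ has order continuous norm; then, via the identification $\mathcal{K}^r(\widehat{\otimes}_{s,|\pi|}^n E;F)\cong\mathcal{P}_{\mathcal{K}}^r(^nE;F)=\mathcal{P}^r(^nE;F)$ from Example \ref{corcompacto} (the last equality being precisely the present hypothesis, through \cite[Theorem 4.1]{libu} which transfers compactness between a polynomial and its linearization, and \cite[Proposition 3.4]{bubuskes}), we conclude that $\mathcal{P}^r(^nE;F)$ has order continuous norm.

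For the second assertion, I would simply add the hypothesis that $E$ fails the dual positive Schur property. Then all four conditions of Theorem \ref{teoprincipal} are available, and condition (4) holds by assumption; hence condition (1) holds, i.e. $\mathcal{P}^r(^nE;F)$ contains no copy of $c_0$. A Banach lattice containing no copy of $c_0$ is a $KB$-space, so $\mathcal{P}^r(^nE;F)$ is a $KB$-space, as claimed. (Alternatively, one can invoke the final line of the proof of Theorem \ref{teoprincipal}: $F$ is a $KB$-space, whence by \cite[Theorem 4]{chenfeng} $\mathcal{L}^r(\widehat{\otimes}_{s,|\pi|}^n E;F)$ is a $KB$-space, and this space is lattice isometric to $\mathcal{P}^r(^nE;F)$.)

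The only mildly delicate point is making sure the order-continuity half does not secretly require the dual positive Schur property; I expect this to be the main thing to check, and it is resolved by noting that the relevant portion of the proof of Theorem \ref{teoprincipal}—namely that $\left(\widehat{\otimes}_{s,|\pi|}^n E\right)^\ast$ has order continuous norm—is carried out before that property is ever invoked. Everything else is a direct citation of results established above.
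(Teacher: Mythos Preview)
Your proposal is correct and follows the same route the paper intends: the paper simply records the corollary as ``an immediate consequence of Theorem \ref{teoprincipal}'', and what you have written is precisely the unpacking of that phrase. In particular, you are right that the order-continuity assertion does not follow from the \emph{statement} of Theorem \ref{teoprincipal} (which assumes the dual positive Schur property fails) but from the first half of its proof of (4)$\Rightarrow$(1), where that hypothesis is not yet used; your identification chain $\mathcal{K}^r(\widehat{\otimes}_{s,|\pi|}^n E;F)\cong\mathcal{P}_{\mathcal{K}}^r(^nE;F)=\mathcal{P}^r(^nE;F)$ and the appeal to \cite[Theorem 2.8]{chenwick} reproduce exactly the relevant intermediate conclusion there.
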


Let us remark what happens for almost limited polynomials.

\begin{remark}\rm
    Let $E$ and $F$ be two Banach lattices with $F$ being atomic with order continuous norm. It follows from \cite[Theorem 4.2(3)]{machelbour} that every almost limited linear operator from any Banach space to $F$ is compact. So, by considering the identifications obtained in Example \ref{corcompacto} and Example \ref{coralmostlimited}, we get that $\mathcal{P}_{\mathcal{K}}^r(^n E; F) = \mathcal{P}_{\rm al}^r(^n E; F)$. Consequently, whenever $E$ fails the dual positive Schur property, the following conditions are all equivalent by Theorem \ref{teoprincipal}: \\
    {\rm (1)} $(\mathcal{P}^r(^n E; F), \norma{\cdot}_r)$ contains no copy of $c_0$.
    \\
    {\rm (2)} $(\mathcal{P}_{\rm al}^r(^n E; F), \norma{\cdot}_{{\rm al},r})$ contains no copy of $c_0$.
    \\
    {\rm (3)} $\mathcal{P}_{\rm al}^r(^n E; F)$ is a projection band in $\mathcal{P}^r(^n E; F)$. \\
    {\rm (4)} Every positive $n$-homogeneous polynomial from $E$ to $F$ is almost limited. 
\end{remark}

      To see when the conditions of Theorem \ref{teoremakb} are all equivalent in the weakly compact case, we need to recall that a Banach lattice $E$ is said to have the positive Grothendieck property if every positive weak* null sequence in $E^\ast$ is weakly null (see, e.g., \cite[p.\,760]{wnukdual}).

\begin{theorem} \label{teoprincipal2}
    Let $n \in \N$, let $E$ be a Banach lattice that fails the positive Grothendieck property and let $F$ be an atomic Dedekind complete Banach lattice such that $\mathcal{P}^r(^n E; F)$ has order continuous norm. Then, the following are equivalent: \\
    {\rm (1)} $(\mathcal{P}^r(^n E; F), \norma{\cdot}_r)$ contains no copy of $c_0$.    \\
    {\rm (2)} $(\mathcal{P}_{\mathcal{W}}^r(^n E; F), \norma{\cdot}_{{\cal W},r})$ contains no copy of $c_0$.
    \\
    {\rm (3)} $\mathcal{P}_{\mathcal{W}}^r(^n E; F)$ is a projection band in $\mathcal{P}^r(^n E; F)$. \\
    {\rm (4)} Every positive $n$-homogeneous polynomial from $E$ to $F$ is weakly compact.
\end{theorem}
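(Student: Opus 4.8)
The strategy is to run essentially the same scheme as in Theorem \ref{teoprincipal}, replacing compactness by weak compactness and the dual positive Schur property by the positive Grothendieck property throughout. The implications (1)$\Rightarrow$(2)$\Rightarrow$(3)$\Rightarrow$(4) come from Theorem \ref{teoremakb} applied to $\A = \mathcal{P}_{\mathcal{W}}(^n E; F)$: this is a closed subspace of $\mathcal{P}(^n E; F)$ (finite rank polynomials are weakly compact, so the ``finite rank $\subseteq\A$'' hypothesis holds), and since $F$ is atomic Dedekind complete with $\mathcal{P}^r(^n E; F)$ of order continuous norm, $F$ in particular has order continuous norm (one recovers this from the order continuity of the operator space, or it is part of the standing hypotheses), so the pair $(E,F)$ has the $\mathcal{W}$-domination property by Example \ref{corfraccompacto}. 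Note that Theorem \ref{teoremakb}(3)$\Rightarrow$(4) also requires $F$ atomic with order continuous norm, which we have.

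\textbf{The main work is (4)$\Rightarrow$(1).} First I would translate everything to the linearization side: via the lattice isometry (\ref{omku}) and Example \ref{corfraccompacto}, condition (4) says every positive operator from $\widehat{\otimes}_{s,|\pi|}^n E$ to $F$ is weakly compact, $\mathcal{P}^r(^n E;F) \cong \mathcal{L}^r(\widehat{\otimes}_{s,|\pi|}^n E;F)$, and $\mathcal{W}^r(\widehat{\otimes}_{s,|\pi|}^n E;F) \cong \mathcal{L}^r(\widehat{\otimes}_{s,|\pi|}^n E;F)$. So it suffices to show $\mathcal{L}^r(\widehat{\otimes}_{s,|\pi|}^n E;F)$ is a $KB$-space. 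The plan is to argue by contradiction: if $F$ is not a $KB$-space, then $F$ contains a positive sequence $(y_k)_k$ equivalent to the $c_0$-basis; since $E$ fails the positive Grothendieck property, there is a positive weak* null sequence $(z_k^\ast)_k \subset (E^\ast)^+$ that is not weakly null, which after normalizing gives $(x_k^\ast)_k \subset S_{E^\ast}^+$, weak* null but not weakly null. Then define $P(x) = \sum_{k=1}^\infty (x_k^\ast(x))^n y_k$, a positive $n$-homogeneous polynomial, and I would show its linearization $P^\otimes$ is \emph{not} weakly compact, contradicting (4). For this, compute $(P^\otimes)^\ast(y_k^\ast) = (x_k^\ast)^n$ in $(\widehat{\otimes}_{s,|\pi|}^n E)^\ast \cong \mathcal{P}^r(^n E)$, exactly as in Theorem \ref{teoprincipal}; the failure of weak nullity of $(x_k^\ast)_k$ should transfer to failure of weak nullity of $((x_k^\ast)^n)_k$ in $\mathcal{P}^r(^n E)$ (for $n=1$ this is immediate; for $n>1$ one needs a lemma that $x_k^\ast \not\cvf 0$ implies $(x_k^\ast)^n \not\cvf 0$, which should follow by testing against a suitable tensor or element of $E$ witnessing non weak nullity), while $(y_k^\ast)_k$ bounded and $(x_k^\ast)^n$ not weakly convergent prevents $(P^\otimes)^\ast$, hence $P^\otimes$, from being weakly compact. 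Then one needs the linear fact that every positive operator $\widehat{\otimes}_{s,|\pi|}^n E \to F$ being weakly compact, together with the order continuity hypothesis, forces $F$ to be a $KB$-space — but the contradiction just derived already shows $F$ must be $KB$; from $F$ a $KB$-space and $\mathcal{P}^r(^n E;F)$ of order continuous norm (equivalently $\mathcal{L}^r(\widehat{\otimes}_{s,|\pi|}^n E;F)$ order continuous) one concludes $\mathcal{L}^r(\widehat{\otimes}_{s,|\pi|}^n E;F)$ is a $KB$-space by the same citation used in Theorem \ref{teoprincipal}, namely \cite[Theorem 4]{chenfeng}.

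\textbf{Anticipated obstacle.} The delicate point is the step ``$(x_k^\ast)_k$ not weakly null $\Rightarrow$ $((x_k^\ast)^n)_k$ not weakly null in $\mathcal{P}^r(^n E) \cong (\widehat{\otimes}_{s,|\pi|}^n E)^\ast$''. In the compact case (Theorem \ref{teoprincipal}) one only needed non-norm-nullity, which is transparent from $\|(x_k^\ast)^n\| = \|x_k^\ast\|^n = 1$; here weak nullity is a genuinely coarser-to-control property. The likely fix: since $(x_k^\ast)_k$ is weak* null but not weakly null in $E^\ast$, by Rosenthal/$\ell_1$-type arguments or directly by the characterization of weak convergence, there is a bounded sequence — or rather an element of $E^{\ast\ast}$ — separating a subsequence; one then builds a functional on $\widehat{\otimes}_{s,|\pi|}^n E$ (e.g. coming from $\otimes^n$ of a point of $E$ where $x_k^\ast$ stays large, if such exists after passing to a subsequence, or more robustly from the bidual action) witnessing that $((x_k^\ast)^n)_k$ does not converge weakly. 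I would isolate this as a short lemma: if $(x_k^\ast)_k \subset E^\ast$ is weak* null but not weakly null then $((x_k^\ast)^n)_k$ is weak* null but not weakly null in $\mathcal{P}^r(^n E)$. Everything else is a routine transcription of the proof of Theorem \ref{teoprincipal} with ``compact'' replaced by ``weakly compact'' and the corresponding references (Example \ref{corfraccompacto} in place of Example \ref{corcompacto}, and the weakly-compact linearization result of Example \ref{corfraccompacto} in place of \cite[Theorem 4.1]{libu}).
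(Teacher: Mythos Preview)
Your plan is correct and matches the paper's proof almost line for line; in particular, the paper also reduces (4)$\Rightarrow$(1) to showing $F$ is a $KB$-space via \cite[Theorem 4]{chenfeng}, builds the same polynomial $P(x)=\sum_k (x_k^\ast(x))^n y_k$, and computes $(P^\otimes)^\ast(y_k^\ast)=(x_k^\ast)^n$. Two small remarks: the paper does \emph{not} normalize the $(x_k^\ast)$ here (unlike in Theorem \ref{teoprincipal}) --- normalization is unnecessary and you should just take the weak*-null, non-weakly-null sequence as is; and your ``anticipated obstacle'' is resolved exactly as you guessed, via the bidual: one picks $x^{\ast\ast}\in E^{\ast\ast}$ with $|x^{\ast\ast}(x_k^\ast)|>\varepsilon$, notes that each $P_k=(x_k^\ast)^n$ is weakly continuous on bounded sets with Aron--Berner extension satisfying $|\widetilde{P_k}(x^{\ast\ast})|\ge\varepsilon^n$, and then invokes \cite[Theorem 3.3]{buquaterly} to conclude $(P_k)_k$ has no weakly null subsequence in $\mathcal{P}^r(^nE)$.
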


\begin{proof}
     Note first that, since $\mathcal{P}^r(^n E; F) \cong \mathcal{L}^r(\widehat{\otimes}_{s, |\pi|}^n E; F)$ has order continuous norm, $F$ also has order continuous norm (see \cite[Proposition 1]{chenfeng}). Thus, it follows from Example \ref{corfraccompacto} that $(\mathcal{P}_{\mathcal{W}}^r(^n E; F), \norma{\cdot}_{{\cal W},r})$ is a Banach lattice such that $\norma{P}_{{\cal W},r} = \norma{P}_r$ for every $P \in \mathcal{P}_{\mathcal{W}}^r(^n E; F)$. Using that finite rank polynomials are weakly compact, we notice that the implications (1)$\Rightarrow$(2)$\Rightarrow$(3)$\Rightarrow$(4) follow from Theorem \ref{teoremakb}.




    (4) $\Rightarrow$ (1) 
    To conclude that $\mathcal{P}^r(^n E; F) \cong \mathcal{L}^r(\widehat{\otimes}_{s, |\pi|}^n E; F)$ 
    is a $KB$-space, by \cite[Theorem 4]{chenfeng} it is enough to check that $F$ is a $KB$-space. Assume, for sake of contradiction, that $F$ is not a $KB$-Space. So, there exists $(y_k)_k \subset F^+$ equivalent to the canonical basis of $c_0$.
    On the other hand, since $E$ fails the positive Grothendieck property, there exists a positive weak* null sequence $(x_k^\ast)_k \subset E^\ast$ which is not weakly null. So, there exists $x^{\ast\ast} \in E^{\ast \ast}$ such that $\lim\limits_k x^{\ast \ast} (x_k^\ast) \neq 0$. By passing to a subsequence if necessary, we may assume that $|x^{\ast \ast} (x_k^\ast)| > \varepsilon$ holds for every $k \in \N$ and some $\varepsilon > 0$. As in the proof of Theorem \ref{teoprincipal}, $P(x) = \sum\limits_{k=1}^\infty (x_k^\ast(x))^n y_k$, $x \in E$, defines a positive $n$-homogeneous polynomial from $E$ to $F$ such that $(P^\otimes)^\ast(y_k^\ast) = (x_k^\ast)^n$ in $\left (\widehat{\otimes}_{s, |\pi|}^n E \right )^\ast \cong \mathcal{P}^r(^n E)$, where $(y_k^\ast)_k$ is the sequence of biorthogonal functionals associated to $(y_k)_k$.  Since each $P_k := (x_k^\ast)^n$ is weakly continuous on bounded sets and $|\widetilde{P_k}(x^{\ast \ast})| = |x^{\ast \ast} (x_k^\ast)|^n \geq \varepsilon^n$ holds for every $k \in \N$, where $\widetilde{P_k} \colon E^{\ast \ast} \to \R$ is the Aron-Berner extension of $P_k$ (see \cite{aronberner, daviegamelin}), we get from \cite[Theorem 3.3]{buquaterly} that $(P_k)_k$ has no weakly null subsequence in $\mathcal{P}^r(^n E)$. Futhermore, since $x_k^\ast \cvfe 0$ in $E^\ast$, we have that $(P_k)_k$ is pointwise convergent to zero, hence it cannot have a weakly convergent subsequence. This yields that $(P^\otimes)^\ast$ cannot be a weakly compact operator. Thus $P^\otimes$ is not a weakly compact (see \cite[Theorem 5.23]{alip}). By \cite[Theorem 3.2]{libu} we obtain that $P$ is  not  weakly compact polynomial, a contradiction. Therefore $F$ is a $KB$-space, which implies that $(\mathcal{L}^r( \widehat{\otimes}_{s, |\pi|}^n E; F), \norma{\cdot}_r)$ is a $KB$-space, and we are done.
\end{proof}

It is well known that $KB$-spaces have order continuous norms. The next results give conditions for lattices of polynomials with order continuous norms to be $KB$-spaces. First we give the result for vector-valued polynomials, which follows immediately from Theorem \ref{teoprincipal2}.

\begin{corollary} \label{cor2}
Let $n \in \N$ and let $E, F$ be two Banach lattices with $F$  Dedekind complete such that $\mathcal{P}^r(^n E; F)$ has order continuous norm. If $E$ fails the positive Grothendieck property, $F$ is atomic and every positive $n$-homogeneous polynomial $P \colon E \to F$ is weakly compact, then $\mathcal{P}^r(^n E; F)$ is a KB-space.
\end{corollary}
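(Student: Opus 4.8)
The plan is to deduce the corollary directly from Theorem \ref{teoprincipal2}. The first step is to check that the standing assumptions of that theorem are all in force for the data at hand: we are given $n \in \N$, a Banach lattice $E$ that fails the positive Grothendieck property, an atomic Dedekind complete Banach lattice $F$, and the hypothesis that $\mathcal{P}^r(^n E; F)$ has order continuous norm. These are precisely the four conditions under which Theorem \ref{teoprincipal2} establishes the equivalence of statements (1)--(4); in particular, by that theorem $(\mathcal{P}_{\mathcal{W}}^r(^n E; F), \norma{\cdot}_{{\cal W},r})$ is a Banach lattice and $\mathcal{P}^r(^n E; F)$ is itself a Banach lattice under the regular norm, since $F$ is Dedekind complete.

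The second step is to observe that the remaining hypothesis of the corollary---that every positive $n$-homogeneous polynomial $P \colon E \to F$ is weakly compact---is nothing but condition (4) of Theorem \ref{teoprincipal2}. Invoking only the implication (4)$\Rightarrow$(1) of that theorem, we conclude that $(\mathcal{P}^r(^n E; F), \norma{\cdot}_r)$ contains no copy of $c_0$.

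The third and last step is to apply the classical dichotomy, recalled just before Lemma \ref{lema}, that a Banach lattice contains no copy of $c_0$ if and only if it is a KB-space. Since $\mathcal{P}^r(^n E; F)$ is a Banach lattice, this characterization applies and yields that $\mathcal{P}^r(^n E; F)$ is a KB-space, which is the assertion. There is no genuine obstacle here: all the substance is already packaged in Theorem \ref{teoprincipal2}, and the proof of the corollary amounts to matching the hypotheses and then quoting the $c_0$/KB-space equivalence; the only point warranting a moment's attention is verifying that each assumption of Theorem \ref{teoprincipal2} does in fact appear among the hypotheses of the corollary, which it does verbatim.
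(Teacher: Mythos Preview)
Your proof is correct and follows exactly the approach implicit in the paper, which simply states that the corollary follows immediately from Theorem~\ref{teoprincipal2}. You have accurately matched the hypotheses, invoked the implication (4)$\Rightarrow$(1), and applied the standard equivalence between a Banach lattice containing no copy of $c_0$ and being a KB-space.
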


The case of scalar-valued polynomials reads as follows:

\begin{corollary} \label{cor3}
    Let $n \in \N$ and let $E$ be a Banach lattice such that $\mathcal{P}^r(^n E)$ has order continuous norm. If $E$ fails the positive Grothendieck property, then $\mathcal{P}^r(^n E)$ is a KB-space.
\end{corollary}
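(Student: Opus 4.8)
The plan is to derive Corollary~\ref{cor3} as the scalar-valued specialization of Corollary~\ref{cor2}, taking $F=\mathbb{K}$ (the scalar field, viewed as a one-dimensional Banach lattice). First I would observe that $\mathbb{K}$ is trivially a Dedekind complete Banach lattice which is atomic, so the structural hypotheses on $F$ in Corollary~\ref{cor2} are automatically satisfied. The hypothesis that $\mathcal{P}^r(^n E)$ has order continuous norm is exactly what we are assuming, and the hypothesis that $E$ fails the positive Grothendieck property is also assumed. The only remaining hypothesis of Corollary~\ref{cor2} that needs checking is that every positive $n$-homogeneous polynomial $P\colon E\to\mathbb{K}$ is weakly compact. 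But this is immediate: any $n$-homogeneous polynomial into the finite-dimensional space $\mathbb{K}$ has relatively compact (indeed bounded, hence relatively compact) image $P(B_E)$, so $P$ is compact and a fortiori weakly compact. Hence all hypotheses of Corollary~\ref{cor2} hold with $F=\mathbb{K}$, and we conclude that $\mathcal{P}^r(^n E)=\mathcal{P}^r(^n E;\mathbb{K})$ is a $KB$-space.

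A second, essentially equivalent route would be to invoke Theorem~\ref{teoprincipal2} directly with $F=\mathbb{K}$: conditions (1)--(4) are equivalent, and condition (4) holds vacuously since every scalar-valued polynomial is weakly compact, so condition (1) holds, i.e.\ $\mathcal{P}^r(^n E)$ contains no copy of $c_0$; being a Banach lattice with order continuous norm and no copy of $c_0$, it is a $KB$-space by the characterization recalled in the text (a Banach lattice contains no copy of $c_0$ iff it is a $KB$-space). One small point to address is that Theorem~\ref{teoprincipal2} requires $\mathcal{P}^r(^n E;\mathbb{K})$ to have order continuous norm as a standing hypothesis, which is precisely what Corollary~\ref{cor3} assumes; and the identification $\mathcal{P}^r(^n E)\cong\mathcal{L}^r(\widehat{\otimes}_{s,|\pi|}^n E;\mathbb{K})=(\widehat{\otimes}_{s,|\pi|}^n E)^\sim$ used there is the standard one from \cite{bubuskes}.

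There is no real obstacle here: the statement is a clean corollary whose proof is a single application of the preceding result once one notes that scalar-valued polynomials are automatically (weakly) compact. The only thing to be mildly careful about is making sure the degenerate case $F=\mathbb{K}$ genuinely meets the definitional requirements — atomic, Dedekind complete — which it does trivially, and that "infinite dimensional $F$" is \emph{not} among the hypotheses of Corollary~\ref{cor2} (it is not), so the one-dimensional $F$ is permitted. I would write the proof in two or three sentences: reduce to Corollary~\ref{cor2} with $F=\mathbb{K}$, note that every $n$-homogeneous scalar polynomial is compact hence weakly compact because $\mathbb{K}$ is finite-dimensional, and conclude.

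\begin{proof}
    Apply Corollary~\ref{cor2} with $F=\mathbb{K}$, the scalar field regarded as a one-dimensional Banach lattice. Then $F$ is atomic and Dedekind complete, $\mathcal{P}^r(^n E)=\mathcal{P}^r(^n E;\mathbb{K})$ has order continuous norm by hypothesis, and $E$ fails the positive Grothendieck property by hypothesis. Finally, every positive $n$-homogeneous polynomial $P\colon E\to\mathbb{K}$ is compact, since $P(B_E)$ is a bounded subset of the finite-dimensional space $\mathbb{K}$ and therefore relatively compact; in particular $P$ is weakly compact. Thus all hypotheses of Corollary~\ref{cor2} are fulfilled, and we conclude that $\mathcal{P}^r(^n E)$ is a $KB$-space.
\end{proof}
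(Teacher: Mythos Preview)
Your proof is correct and follows exactly the paper's approach: take $F=\R$ in Corollary~\ref{cor2}, note that $\R$ is atomic (and Dedekind complete), and that every scalar-valued $n$-homogeneous polynomial is weakly compact. The paper's proof is the same one-line reduction.
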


\begin{proof} Just take $F = \R$ in the corollary above, use that $\R$ is atomic and that every $n$-homogeneous polynomial $P \colon E \to \R$ is weakly compact.
\end{proof}

\noindent G. Botelho  \hspace*{13.1em}V. C. C. Miranda\\
Faculdade de Matem\'atica \hspace*{6.6em}  Centro de Matem\'atica, Computa\c c\~ao e Cogni\c c\~ao \\
Universidade Federal de Uberl\^andia \hspace*{2em} Universidade Federal do ABC \\
38.400-902 -- Uberl\^andia -- Brazil \hspace*{3.4em} 09.210-580 -- Santo Andr\'e -- 		Brazil.  \\
e-mail: botelho@ufu.br \hspace*{7.8em} e-mail: colferaiv@gmail.com

\medskip

\noindent P. Rueda\\
Departamento de An\'alisis Matem\'atico\\
Universidad de Valencia\\
46.100 -- Burjasot, Valencia -- Spain\\
e-mail: pilar.rueda@uv.es

\end{document}